\renewcommand\subsection{\@startsection{subsection}{2}%
  \z@{.5\linespacing\@plus.7\linespacing}{-.5em}%
  {\normalfont\scshape}}
\numberwithin{equation}{section}
\newcommand{\N}{\mathbb{N}}
\newcommand{\1}{\mathds{1}}
\DeclareMathOperator{\diam}{diam\,}
\DeclareMathOperator{\co}{co}
\DeclareMathOperator{\conv}{co}
\DeclareMathOperator{\cconv}{\overline{\conv}}
\DeclareMathOperator*{\esssup}{ess\,sup}
\newcommand{\norm}[1]{\left\Vert#1\right\Vert}
\newcommand{\nnorm}[1]{\left\vvvert#1\right\vvvert}
\newcommand{\abs}[1]{\left\vert#1\right\vert}
\newcommand{\eps}{\varepsilon}
\renewcommand{\geq}{\geqslant}
\renewcommand{\leq}{\leqslant}
\theoremstyle{plain}
\newtheorem{thm}{Theorem}[section]
\newtheorem{cor}[thm]{Corollary}
\newtheorem{lem}[thm]{Lemma}
\newtheorem{prop}[thm]{Proposition}
\newtheorem{theorem}[thm]{Theorem}
\newtheorem{proposition}[thm]{Proposition}
\theoremstyle{definition}
\newtheorem{rem}[thm]{Remark}
\newtheorem{definition}[thm]{Definition}
\author {Christian Cobollo}
\author{Daniel Isert}
\author {Gin\'es L\'opez-P\'erez}
\author {Miguel Mart\'in}
\author {Yo\"el Perreau}
\author {Alicia Quero}
\author {Andr\'es Quilis}
\author {Daniel L. Rodr\'iguez-Vidanes}
\author {Abraham Rueda Zoca}
\address[Cobollo] {Universitat Polit\`ecnica de Val\`encia. Instituto Universitario de Matem\'atica Pura y Aplicada, Camino de Vera, s/n 46022 Valencia, Spain
 \newline
\href{https://orcid.org/0000-0002-5901-5798}{ORCID: \texttt{0000-0002-5901-5798} } }
\email{\texttt{chcogo@upv.es}}
\address[Isert]{Univeristat de Val\`encia. Departamento de an\'alisis matem\'atico, Calle Doctor Moliner, 50, 46100 Burjassot (Valencia), Spain
 \newline
\href{https://orcid.org/0000-0002-4006-2500}{ORCID: \texttt{0000-0002-4006-2500}}}\email{\texttt{isada@alumni.uv.es}}
\address[L\'opez-P\'erez, Mart\'in, and Rueda Zoca] {Universidad de Granada, Facultad de Ciencias.
Departamento de An\'{a}lisis Matem\'{a}tico, 18071 Granada, Spain
 \newline
\href{https://orcid.org/0000-0002-3689-1365}{ORCID: \texttt{0000-0002-3689-1365} }
 \newline
\href{https://orcid.org/0000-0003-4502-798X}{ORCID: \texttt{0000-0003-4502-798X} }
 \newline
\href{https://orcid.org/0000-0003-0718-1353}{ORCID: \texttt{0000-0003-0718-1353} } }
\email{\texttt{glopezp@ugr.es, mmartins@ugr.es,
abrahamrueda@ugr.es}}
\address[Perreau]{University of Tartu, Institute of Mathematics and Statistics, Narva mnt 18, 51009 Tartu linn, Estonia \newline
	\href{https://orcid.org/0000-0002-2609-5509}{ORCID: \texttt{0000-0002-2609-5509}}}
 \email{\texttt{yoel.perreau@ut.ee}}
\address[Quero]{Universidad de Granada, Facultad de Ciencias,
Departamento de An\'{a}lisis Matem\'{a}tico, 18071 Granada, Spain.
\newline
Current address: Czech Technical University in Prague, Faculty of Information Technology, Th\'{a}kurova 9, 160 00, Prague 6, Czech Republic
\newline
\href{http://orcid.org/0000-0003-4534-8097}{ORCID: \texttt{0000-0003-4534-8097} } }
\email{aliciaquero@ugr.es}
\address[Quilis] {Universit\'{e} Franche-Comt\'{e}, Laboratoire de math\'{e}matiques de Besançon, UMR CNRS 6623, 16, route de Gray, 25000 Besançon, France
 \newline
\href{https://orcid.org/0000-0001-6022-9286}{ORCID: \texttt{0000-0001-6022-9286} } }
\email{\texttt{andresqsa@gmail.com}}
\address[Rodríguez-Vidanes]{Instituto de Matem\'{a}tica Interdisciplinar (IMI). Universidad Complutense de Madrid, Facultad de Ciencias Matemáticas, Departamento de An\'{a}lisis Matem\'{a}tico y Matem\'{a}tica Aplicada, Plaza de Ciencias 3, 28040 Madrid, Spain 
 \newline
\href{https://orcid.org/0000-0002-1016-096X}{ORCID: \texttt{0000-0002-1016-096X} } }
\email{\texttt{dl.rodriguez.vidanes@ucm.es}}
\title{Banach spaces with small weakly open subsets of the unit ball and massive sets of Daugavet and $\Delta$-points}
\begin{document}

\subjclass[2020]{46B03, 46B20, 46B22}

\keywords{Daugavet points; $\Delta$-points; points of continuity; renormings, spaces of continuous functions}

\maketitle

\begin{abstract}
We prove that there exists an equivalent norm $\nnorm{\cdot}$ on $L_\infty[0,1]$ with the following properties:
\begin{enumerate}
    \item The unit ball of $(L_\infty[0,1],\nnorm{\cdot})$ contains non-empty relatively weakly open subsets of arbitrarily small diameter;
    \item The set of Daugavet points of the unit ball of $(L_\infty[0,1],\nnorm{\cdot})$ is weakly dense;
    \item The set of ccw $\Delta$-points of the unit ball of $(L_\infty[0,1],\nnorm{\cdot})$ is norming.
\end{enumerate}
We also show that there are points of the unit ball of $(L_\infty[0,1],\nnorm{\cdot})$ which are not $\Delta$-points, meaning that the space $(L_\infty[0,1],\nnorm{\cdot})$ fails the diametral local diameter 2 property. Finally, we observe that the space $(L_\infty[0,1],\nnorm{\cdot})$ provides both alternative and new examples that illustrate the differences between the various diametral notions for points of the unit ball of Banach spaces.
\end{abstract}

\markboth{Cobollo, Isert, L\'opez-P\'erez, Mart\'in, Perreau, Quero, Quilis, Rodr\'iguez and Rueda Zoca}{Banach spaces with small weakly open subsets of the unit ball and massive sets of Daugavet and $\Delta$-points}

\section{Introduction}\label{sec:introduction}

Recall that a Banach space $X$ is said to have the \textit{Daugavet property} if every rank one bounded operator $T:X\longrightarrow X$ satisfies the \textit{Daugavet equation}
\begin{equation}\label{DE}\tag{DE}
\Vert I+T\Vert=1+\Vert T\Vert,
\end{equation}
where $I:X\longrightarrow X$ stands for the identity operator. Furthermore, if $X$ has the Daugavet property, then every weakly compact operator $T:X\to X$ satisfies (\ref{DE}). Since the Daugavet equation is a stress of the operator norm's triangle inequality, it is natural to expect that it will impose severe restrictions on the underlying operator. As a matter of fact, if $\Vert T\Vert$ is an eigenvalue of $T$, then $T$ satisfies the Daugavet equation, and the converse holds true if the space $X$ is uniformly convex \cite[Lemma 11.3 and Theorem 11.10]{abal}. 

Actually, the Daugavet property puts very strong constraints on the structure of the underlying Banach space. An old result in this line is that a Banach space with the Daugavet property cannot be linearly embedded into any Banach space with an unconditional basis (see e.g. \cite[Theorem 3.2]{wernersurvey01}). Further restrictions follow from the celebrated geometric characterisation of the Daugavet property exhibited in \cite[Lemma 2.1]{kssw} stated as follows:
a Banach space $X$ has the Daugavet property if and only if every point $x\in S_X$ satisfies the following condition: given any slice $S$ of $B_X$ and any $\varepsilon>0$, there exists $y\in S$ such that
$$\Vert x-y\Vert>2-\varepsilon.$$
The latter characterisation, which still holds with respect to non-empty relatively weakly open subsets (resp. convex combinations of slices) \cite[Lemma 3]{shv00}, shows that spaces with the Daugavet property live in the universe of Banach spaces far away from Asplundness and Radon-Nikodym property. Indeed, the above characterisation allows to prove that if $X$ has the Daugavet property, then $X$ contains an isomorphic copy of $\ell_1$, and every slice, relatively weakly open subset and convex combination of slices of $B_X$ has diameter two.

Very recently, local versions of the Daugavet property have been considered in the following sense.

\begin{definition}
Let $X$ be a Banach space and let $x\in S_X$. We say that $x$ is
\begin{enumerate}
\item a \textit{Daugavet point} if, for every slice $S$ of $B_X$ and every $\varepsilon>0$, there exists $y\in S$ such that $\Vert y-x\Vert>2-\varepsilon$,
\item a \textit{super Daugavet point} if, for every non-empty relatively weakly open subset $W$ of $B_X$ and every $\varepsilon>0$, there exists $y\in W$ such that $\Vert y-x\Vert>2-\varepsilon$,
\item a \textit{ccs Daugavet point} if, for every convex combination of slices $C$ of $B_X$ and every $\varepsilon>0$, there exists $y\in C$ such that $\Vert y-x\Vert>2-\varepsilon$.
\end{enumerate}
\end{definition}

A classical result, often known as Bourgain's lemma, establishes that every non-empty relatively weakly open subset of $B_X$ contains a convex combination of slices of $B_X$ (see e.g. \cite[Lemma II.1]{ggms}). As an immediate consequence we infer that every ccs Daugavet point is a ``ccw Daugavet point'', meaning that the property of the definition actually holds for every convex combination of non-empty relatively weakly open subsets of $B_X$. In particular, every ccs Daugavet point is a super Daugavet point. Furthermore, it is known that the mere existence of a ccs Daugavet point implies that every convex combination of slices (and of weak open subsets) of the unit ball of the underlying space has diameter 2 \cite[Proposition 3.12]{mpr23}. Apart from finite dimensional considerations, this is surprisingly the only known isomorphic obstruction to the existence of diametral points, see below for more details. 

Variants of the above notions restricting to slices, weakly open subsets, and convex combinations of slices and weakly open subsets containing a given point, were also considered.

\begin{definition}
Let $X$ be a Banach space and let $x\in S_X$. We say that $x$ is
\begin{enumerate}
\item a \textit{$\Delta$-point} if, for every slice $S$ of $B_X$  with $x\in S$ and every $\varepsilon>0$, there exists $y\in S$ such that $\Vert y-x\Vert>2-\varepsilon$,
\item a \textit{super $\Delta$-point} if, for every non-empty relatively weakly open subset $W$ of $B_X$ with $x\in W$ and every $\varepsilon>0$, there exists $y\in W$ such that $\Vert y-x\Vert>2-\varepsilon$,
\item a \textit{ccs $\Delta$-point} if, for every convex combination of slices $C$ of $B_X$ with $x\in C$ and every $\varepsilon>0$, there exists $y\in C$ such that $\Vert y-x\Vert>2-\varepsilon$;
\item a \textit{ccw $\Delta$-point} if, for every convex combination of non-empty relatively weakly open subsets $D$ of $B_X$ with $x\in D$ and every $\varepsilon>0$, there exists $y\in D$ such that $\Vert y-x\Vert>2-\varepsilon$.
\end{enumerate}
\end{definition}

The notions of Daugavet and $\Delta$-points were introduced in \cite[Section 1]{ahlp20}, whereas the rest of notions go back to \cite[Definitions 2.4 and 2.5]{mpr23}. See \cite{aalmppv23,aalmppvfirst23,jr22,klt23,mpr23,veeorg} for further research on these notions. In particular, note that it is still unknown whether every ccs $\Delta$-point has to be a super $\Delta$-point, and whether the notions of ccs and ccw $\Delta$-points are different. This is due to the subtle failure of a localization of Bourgain's lemma (see e.g. \cite[Remark 2.3]{mpr23}). However, all the other notions are known to be different and can even present extreme differences, see \cite{mpr23} for more details.  

In view of the fact that the Daugavet property imposes strong restrictions on the geometric structure of the given space, a natural question is how the mere presence of Daugavet or $\Delta$-points affect the geometric structure of the underlying Banach space. Although it was proved in \cite{almp22} that finite dimensional spaces contain no $\Delta$-points and that the notion strongly negates some isometric properties of Banach spaces (asymptotic uniform smoothness and weak$^*$ asymptotic uniform convexity \cite{aalmppvfirst23, almp22}, or existence of subsymmetric bases \cite{almt20} as well as unconditional bases with small constants of unconditionality which are either shrinking or boundedly complete \cite{aalmppvfirst23}), surprising examples have recently shown the purely isometric nature of these local notions. To name a few, there exists a space with a 1-unconditional basis and a weakly dense subset of Daugavet point \cite{almt20}, there exists a Lipschitz-free space with the RNP and a Daugavet point which is both isomorphic to $\ell_1$ and isometric to a dual space \cite{aalmppvfirst23, veeorg}, and there exists an equivalent norm on $\ell_2$ for which the unit vector basis $e_1$ is simultaneously a super Daugavet point and a ccw $\Delta$-point \cite{hlpv23}. Actually, every infinite dimensional Banach space can be renormed with a $\Delta$-point \cite{aalmppvfirst23}, and every Banach space with a weakly-null unconditional Schauder basis can be renormed with a super Daugavet point \cite{hlpv23}. 

The various $\Delta$-notions can be seen as extreme opposites to the classical notions of denting points, points of continuity and points of strong regularity (also see \cite{cj23} for precise quantitative formulations of this statement). They are localized versions of the so called ``diametral diameter 2 properties'' (\emph{DLD2P}, \emph{DD2P} and \emph{DSD2P}) that have previously appeared in the literature under various names, but were formally introduced in \cite{blr18}. With this terminology, the DLD2P (resp. DD2P) asks all the elements of the unit sphere of a Banach space to be $\Delta$-points (resp.  super $\Delta$-points). The DSD2P was originally defined by asking all the points inside of the unit ball of a Banach space to be ccs $\Delta$-points, but it turned out to be equivalent to the Daugavet property \cite{kadets20}. On the other hand, its restricted version (the \emph{restricted DSD2P} \cite{mpr23}), as well as the DLD2P and the DD2P, are known to be strictly weaker properties. Although the Daugavet property can be characterized by Daugavet, super Daugavet or ccs Daugavet points, it is currently unknown whether the three remaining diametral properties are equivalent. Furthermore, it is unknown whether the DLD2P forces all the weakly open subsets of the unit ball to have diameter 2 (but note that there exists a space with the DD2P, the restricted DSD2P and convex combinations of slices of arbitrarily small diameter in its unit ball \cite{ahntt}). 

The example from \cite{almt20} provides an interesting insight to this question. Indeed, the space that was constructed there with a weakly dense subset of Daugavet points and a 1-unconditional basis admits non-empty relatively weakly open subsets of arbitrarily small diameter in its unit ball. In fact, each of the Daugavet points in the considered weakly dense set is a point of continuity for the identity mapping $I:(B_X, w)\to (B_X,\norm{\cdot})$ (in other words, it has relative weak open neighborhoods of arbitrarily small diameter). However, this space cannot contain any point satisfying a stronger diametral condition, as it was proved in \cite{almt20} (resp. \cite{mpr23}) that spaces with a 1-unconditional basis contain neither super $\Delta$-points nor ccs $\Delta$-points. Thus, at this point, a natural question is how big the set of stronger notions than Daugavet  and $\Delta$-points can be in a Banach space where there are non-empty relatively weakly open subsets of arbitrarily small diameter. 

In view of this fact, during the last week of June 2023, in the framework of 2023 ICMAT-IMAG Doc-Course in Functional Analysis, a supervised research program was celebrated at IMAG (Granada), where we considered the following question: How massive can the sets of Daugavet, super $\Delta$, super Daugavet and ccs/ccw $\Delta$-points be in a Banach space having non-empty relatively weakly open subsets of arbitrary small diameter in its unit ball? The main goal of the project was to study the renorming techniques from \cite{blrjmaa}, where it is proved that every Banach space containing $c_0$ can be renormed in such a way that all the slices of the new unit ball have diameter 2, whereas it admits weakly open subsets of arbitrarily small diameter, and to try to build a similar renorming in a more suitable context for our study, namely in the space $L_\infty[0,1]$. The idea is also inspired by the construction from \cite[Section~4.6]{mpr23}, where similar techniques were used in order to produce an example of a super Daugavet point that is not a ccs $\Delta$-point. 

The main aim of the present paper is to present the results obtained in this workshop. We prove that the space $L_\infty[0,1]$ admits an equivalent renorming such that the new unit ball contains non-empty relatively weakly open subsets of arbitrarily small diameter and such that the sets of Daugavet points and super $\Delta$-points are as big as they can be taking into account that its unit ball contains non-empty weakly open subsets of small diameter. This is a big difference with the above mentioned example of \cite{almt20}, where the set of super $\Delta$-points is empty. Furthermore, we show that this space also contains points which are simultaneously super Daugavet and ccw $\Delta$, which is the strongest diametral notion we can get in this context. We collect the results in the following theorem.

\begin{theorem}\label{thm:main_theorem}
For every $\eps\in(0,1)$, there exists an equivalent norm $\nnorm{\cdot}_\eps$ on $L_\infty[0,1]$ with the following properties:
\begin{enumerate}
    \item For every $f\in L_\infty[0,1]$, $\norm{f}_\infty\leq \nnorm{f}_{\eps}\leq\frac{1}{1-\eps}\norm{f}_\infty$;
    \item The unit ball of $(L_\infty[0,1],\nnorm{\cdot}_\eps)$ contains non-empty relatively weakly open subsets of arbitrarily small diameter;
    \item The set of Daugavet points of the unit ball of $(L_\infty[0,1],\nnorm{\cdot}_\eps)$ is weakly dense;
    \item The set of ccw $\Delta$-points of the unit ball of $(L_\infty[0,1],\nnorm{\cdot}_\eps)$ is norming (in other words, every slice of the unit ball contains a ccw $\Delta$-point); 
    \item There are points of the unit ball of $(L_\infty[0,1],\nnorm{\cdot}_\eps)$ which are: \begin{enumerate}
        \item Simultaneously super Daugavet points and ccw $\Delta$-points;
        \item Simultaneously  Daugavet points and preserved extreme points (hence also ccw $\Delta$-points), but not super Daugavet points;
        \item Simultaneously Daugavet points and points of continuity. 
    \end{enumerate}
\end{enumerate} Furthermore, if $\eps$ is smaller than $1/7$, then there are points of the unit ball of $(L_\infty[0,1],\nnorm{\cdot}_\eps)$ which are not $\Delta$-points (in other words, $(L_\infty[0,1],\nnorm{\cdot}_\eps)$ fails the DLD2P).
\end{theorem}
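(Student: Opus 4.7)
The plan is to define $\nnorm{\cdot}_\eps$ via its dual unit ball: choose a weak$^*$-compact convex symmetric set $K_\eps \subseteq C(\Omega)^*$ with $B_{C(\Omega)^*} \subseteq K_\eps \subseteq (1-\eps)^{-1} B_{C(\Omega)^*}$, and set $\nnorm{f}_\eps := \sup_{\mu \in K_\eps} |\mu(f)|$. Equivalently, the unit ball $B_\eps$ is the intersection of the sup-norm ball with countably many symmetric slabs $\{f \in C(\Omega) : |\mu_n(f)| \leq 1-\eps\}$, where each $\mu_n$ is a signed Borel measure of total variation one. Since $\Omega$ is perfect, I plan to take the $\mu_n$ to be normalized \emph{oscillation functionals} of the form $\mu_n = \tfrac{1}{2}(\nu_n - \delta_{t_n})$, with $t_n \in \Omega$ a chosen distinguished point and $\nu_n$ a probability measure supported in a shrinking open neighborhood of $t_n$. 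Following the combinatorial scheme of \cite{blrjmaa} and \cite[Section~4.6]{mpr23}, I would arrange the distinguished points $(t_\sigma)_{\sigma \in 2^{<\omega}}$ along a dyadic tree of pairwise disjoint open sets of shrinking diameter. Property (1) is then immediate from the sandwich condition on $K_\eps$.

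For property (2), I exhibit a weak neighborhood of small diameter by picking an $f_0 \in B_\eps$ that saturates the constraints $|\mu_\sigma(f_0)| = 1-\eps$ along a long branch of the tree, and taking $W := \{f \in B_\eps : |\mu_\sigma(f-f_0)| < \eta \text{ for } \sigma \in F\}$ for a finite tree-segment $F$: the saturated constraints together with the sup-norm bound force every $f \in W$ to coincide with $f_0$ up to a small error on the nested supports of the $\nu_\sigma$, which can be made to have arbitrarily small diameter in $\Omega$. For property (3), I identify Daugavet points among the \emph{inactive} functions $f \in S_{B_\eps}$, namely those with $|\mu_n(f)| \leq 1-\eps-\gamma$ for every $n$ and some $\gamma > 0$; for such $f$, a slice of $B_\eps$ is determined by a functional in $K_\eps$ whose behavior near $f$ reduces to that of an honest measure on $\Omega$, and a classical sign-flipping argument on a small open set disjoint from the relevant supports $\supp \nu_n$ yields a point of the slice at sup-norm distance close to $2$ from $f$. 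Weak density of inactive functions in $S_{B_\eps}$ then follows from the freedom to perturb any continuous function on the countable union of the shrinking supports.

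The most delicate step is property (4), the norming character of ccw $\Delta$-points. Given a slice of $B_\eps$, I select inside it a function $f$ whose values on $(t_\sigma)_{\sigma \in 2^{<\omega}}$ are tuned so that $f$ nearly norms the slice-defining functional while also satisfying the ccw $\Delta$ condition. Checking the latter requires showing that, for every convex combination $\sum_i \lambda_i W_i$ of relatively weakly open subsets of $B_\eps$ containing $f$, one can find $g_i \in W_i$ such that $g := \sum_i \lambda_i g_i$ satisfies $\nnorm{g-f}_\eps > 2 - \delta$. Using Bourgain's lemma to replace each $W_i$ by a convex combination of slices inside it, I would build each $g_i$ by a Urysohn-type surgery that flips the sign of $f$ on a common open set $A$ chosen so that $\nu_n(A)$ is uniformly small for every $\mu_n$ involved in the defining functionals of these slices. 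Simultaneously arranging $g_i \in W_i$, $g \in B_\eps$, and $\nnorm{g-f}_\eps \approx 2$ is the main obstacle and will require careful exploitation of the tree combinatorics. Once (4) is in hand, properties (5) and (6) should follow by explicit constructions: functions whose interaction profile with the oscillation measures realizes each listed combination, and, for (6), a function peaking at a single $t_\sigma$ with near-maximal local oscillation, for which the slice of $B_\eps$ determined by $\mu_\sigma$ near $f$ collapses and hence cannot witness the $\Delta$-condition.
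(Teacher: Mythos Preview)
Your proposal differs fundamentally from the paper's construction. The paper defines the new unit ball \emph{primally} as $B_\eps=\cconv\big((2K_0-\1)\cup(-2K_0+\1)\cup((1-\eps)B_{C(\Omega)}+\eps B_{\ker(\delta_0)})\big)$, where $K_0$ is an isometric copy in $C(\Omega)$ of the Argyros--Odell--Rosenthal ``Summing Tree Simplex'' $K\subset c_0(\N^{<\omega})$. Every item in the theorem is then read off from known properties of $K$: it has the CPCP (giving (2) and (5c)), the generators $x_\alpha$ are preserved extreme points of $K$ (giving (5b) and part of (4)), and each node has infinitely many successors so that $x_{\alpha\smallfrown n}\to x_\alpha$ weakly (used throughout for weak approximation). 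Your dual-slab construction over a \emph{dyadic} tree has no analogue of any of these features.

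There are at least two concrete gaps. First, in your argument for (4) you ``use Bourgain's lemma to replace each $W_i$ by a convex combination of slices inside it''. But Bourgain's lemma gives no control over which points lie in that inner convex combination; in particular there is no reason the decomposition witnesses $f_i$ (from $f=\sum_i\lambda_i f_i$, $f_i\in W_i$) survive the replacement. This is exactly the localization failure flagged in \cite[Remark~2.3]{mpr23} that makes ccw~$\Delta$ a genuinely separate notion, so your reduction is invalid and you are still facing the raw ccw condition. The paper avoids this entirely: it observes that if $\varphi$ attains both values $\pm1$ on every $U_\alpha$ and $\varphi=\sum_i\lambda_i\varphi_i$ with $\varphi_i\in B_\eps$, then each $\varphi_i$ must also attain $\pm1$ on every $U_\alpha$, hence lies in the perturbed ball $B$ and can be weakly approximated there by a common sign-flip. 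Second, you provide no mechanism for (5b). A sup-norm ball cut by countably many oscillation slabs has no evident preserved extreme points that are Daugavet yet not super Daugavet; in the paper these are precisely the images $u_\alpha$ of the $x_\alpha$, and both the preserved-extreme proof and the ``not super Daugavet'' proof rely on the weak convergence $u_{\alpha\smallfrown n}\to u_\alpha$ along an \emph{infinite} set of successors, which a dyadic tree cannot supply. Your sketch for (2) is also unconvincing as stated: saturating finitely many oscillation constraints $|\mu_\sigma(f)|\approx 1-\eps$ only pins down $f$ on the small supports of the $\nu_\sigma$, and says nothing about $\norm{f-f_0}_\infty$ on the rest of $\Omega$.
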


In particular, in the above renorming there are Daugavet points which are not super $\Delta$-points and there are ccw $\Delta$-points which are not super Daugavet points. Even though it was already known that these notions are not equivalent (see \cite{mpr23} for references), the various counterexamples from the literature were obtained with different techniques. Theorem \ref{thm:main_theorem} shows that such counterexamples may live in the same Banach space. Furthermore, it is, to our knowledge, the first example of a Banach space which contains points that are both Daugavet and ccw $\Delta$, but not super Daugavet.

\section{Notation and preliminary results}\label{sec:preliminaries}

Given a Banach space $X$, $B_X$ (resp. $S_X$) stands for the closed unit ball (resp. the unit sphere) of $X$. We denote by $X^*$ the topological dual of $X$. By a slice of $B_X$, we mean any non-empty subset of $B_X$ given as the intersection of $B_X$ with an open half-space. Every slice $S$ of $B_X$ can be written as $S=S(B_X,f,\delta)$, where $f$ is a norm one functional on $X$, $\delta>0$ and\begin{equation*}
S(B_X,f,\delta):=\{x\in B_X\colon f(x)>1-\delta\}.
\end{equation*} If $A$ is a subset of a Banach space $X$, we denote by $\conv{A}$ (resp. $\cconv{A}$) the convex hull (resp. the closure of the convex hull) of $A$. Recall that a subset $A$ in the unit ball of Banach space $X$ is said to be \emph{norming} if $\norm{x^*}=\sup_{x\in A}{\abs{x^*(x)}}$ for every $x^*\in X^*$. In particular, if $A$ is a symmetric subset of $B_X$, then this property is equivalent to $A$ satisfying $B_X=\cconv{A}$ (in other words, to every slice of $B_X$ containing an element of $A$).  We deal with real Banach spaces only.

Let $\mu$ be the Lebesgue measure on the segment $[0,1]$. Recall that two measurable subsets $A$ and $B$ of $[0,1]$ are said to be \emph{essentially disjoint} if $\mu(A\cap B)=0$. The space $L_\infty[0,1]$ stands for the classical Banach space of all equivalent classes of $\mu$-essentially bounded functions on $[0,1]$ equipped with the norm given by the essential supremum. Recall that the following criteria provides a practical way of testing whether a given sequence in $L_\infty[0,1]$ is weakly-null (see e.g. \cite[Theorem~8.7]{toland20}).

\begin{thm}\label{thm:weakly_null_sequences_L_infty}
 A bounded sequence $(u_n)$ in $L_\infty[0,1]$ converges weakly to $0$ if and only we can find, for every $\delta>0$ and $(k_j)$ increasing sequence of natural numbers, some $J\in\N$ such that \begin{equation*}
 \mu\left(\bigcap_{j=1}^J\left\{t\in[0,1]:\ \abs{u_{k_j}(t)}>\delta\right\}\right)=0.
\end{equation*} In particular, every bounded sequence of functions with pairwise essentially disjoint supports in $L_\infty[0,1]$ is weakly-null.
\end{thm}

We then recall some classical definitions from Banach space geometry. Given a convex set $A$ in a vector space $X$, a point $x_0\in A$ is said to be \emph{extreme} if the condition $x_0=\frac{y+z}{2}$ for $y,z\in A$ forces $y=z=x_0$. Given a bounded, closed, and convex subset $C$ of a Banach space $X$, a point $x_0\in C$ is a \textit{preserved extreme point} if $x_0$ is an extreme point in $\overline{C}^{w^*}$, where the closure is taken in the $w^*$-topology of $X^{**}$. For easy reference, let us point out the following characterisation of preserved extreme points (which proof can be found, for instance, in \cite[Proposition 0.1.3]{luiscathesis}). 

\begin{proposition}\label{prop:carapreext}
Let $X$ be a Banach space and let $C\subseteq X$ be a bounded, closed, and convex set. Let $x_0\in C$. The following are equivalent:
\begin{enumerate}
    \item $x_0$ is a preserved extreme point of $C$;
    \item The slices of $C$ containing $x_0$ form a neighbourhood basis of $x_0$ in $C$ for the weak topology;
    \item For every pair of nets $(y_s)$ and $(z_s)$ in $C$ such that $\frac{y_s+z_s}{2}\rightarrow x_0$ weakly, we have $y_s\rightarrow x_0$ weakly.
\end{enumerate}
\end{proposition}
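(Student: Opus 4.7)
The plan is to leverage the $w^*$-compactness of $K := \overline{C}^{w^*} \subseteq X^{**}$, the $w^*$-density of $C$ in $K$, and the fact that the $w^*$-topology of $X^{**}$ restricted to $X$ coincides with the weak topology of $X$. I would first establish $(1) \Leftrightarrow (3)$ and then $(1) \Leftrightarrow (2)$, which together close the cycle.

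For $(1) \Rightarrow (3)$, given nets $(y_s), (z_s)$ in $C$ with $\tfrac{y_s+z_s}{2} \to x_0$ weakly, view them as nets in the $w^*$-compact set $K$. Any subnet of $(y_s)$ admits a further $w^*$-convergent sub-subnet $y_{s_\alpha} \to y^{**} \in K$; then $z_{s_\alpha} \to 2 x_0 - y^{**} =: z^{**} \in K$, with $x_0 = \tfrac{y^{**}+z^{**}}{2}$. Extremality of $x_0$ in $K$ forces $y^{**} = z^{**} = x_0$, so $y_{s_\alpha} \to x_0$ weakly, and consequently $y_s \to x_0$ weakly. For the converse $(3) \Rightarrow (1)$, decompose $x_0 = \tfrac{y^{**}+z^{**}}{2}$ in $K$, approximate $y^{**}, z^{**}$ by nets in $C$ (using a common product directed set so that the midpoints still converge weakly to $x_0$), and apply (3) to these approximating nets to conclude $y^{**} = z^{**} = x_0$.

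For $(1) \Rightarrow (2)$, Choquet's classical lemma on compact convex sets in locally convex spaces yields that the $w^*$-slices of $K$ containing the extreme point $x_0$ form a $w^*$-neighborhood basis at $x_0$ in $K$. A basic weak neighborhood $U = V \cap C$ of $x_0$ in $C$, determined by finitely many functionals $x_i^* \in X^*$, extends canonically to a $w^*$-open neighborhood $\tilde V$ of $x_0$ in $X^{**}$ with $\tilde V \cap X = V$; a Choquet slice $\tilde S$ inside $\tilde V \cap K$ then restricts to a slice $\tilde S \cap C$ of $C$ contained in $U$, as required.

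For $(2) \Rightarrow (1)$, suppose toward contradiction that $x_0 = \tfrac{y^{**}+z^{**}}{2}$ in $K$ with $y^{**} \neq x_0$, and select $x^* \in X^*$ together with $\delta > 0$ such that $\langle x^*, y^{**}\rangle = x^*(x_0) + \delta$ and $\langle x^*, z^{**}\rangle = x^*(x_0) - \delta$. By (2), find a slice $S = \{x \in C : g(x) > \beta\}$ with $x_0 \in S \subseteq \{x \in C : |x^*(x) - x^*(x_0)| < \delta/2\}$, and let $\tilde S := \{x^{**} \in K : \langle g, x^{**}\rangle > \beta\}$ be its $w^*$-open extension to $K$. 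Using the $w^*$-density of $C$ in $K$, one gets $\tilde S \subseteq \overline{S}^{w^*} \subseteq \{x^{**} \in K : |\langle x^*, x^{**}\rangle - x^*(x_0)| \leq \delta/2\}$, which excludes both $y^{**}$ and $z^{**}$ from $\tilde S$ while the midpoint $x_0$ still lies in $\tilde S$, contradicting the convexity of $\tilde S$. The main obstacle is precisely this inclusion $\tilde S \subseteq \overline{S}^{w^*}$: hypothesis (2) only constrains the slice on the subset $C$, and the $w^*$-density argument (that a $w^*$-open subset of $K$ equals the $w^*$-closure of its trace on the $w^*$-dense subset $C$) is what converts weak-topological control on $C$ into $w^*$-topological control on all of $K$.
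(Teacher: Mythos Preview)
The paper does not prove this proposition; it merely states it and refers to \cite[Proposition~0.1.3]{luiscathesis}. Your argument is essentially correct and follows the standard route (Choquet's lemma for $(1)\Rightarrow(2)$, $w^*$-compactness of $\overline{C}^{w^*}$ together with $w^*$-density of $C$ for the remaining implications).

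There is one slip at the very end of your $(2)\Rightarrow(1)$. Having $x_0\in\tilde S$ while $y^{**},z^{**}\notin\tilde S$ does \emph{not} contradict the convexity of $\tilde S$: convexity only says that if both endpoints lie in $\tilde S$ then so does the midpoint, not the converse. What you actually use is that the \emph{complement} $K\setminus\tilde S=\{x^{**}\in K:\langle g,x^{**}\rangle\le\beta\}$ is convex (equivalently, that $\tilde S$ is cut out by a single linear inequality). Since $y^{**},z^{**}\in K\setminus\tilde S$, convexity of this complement forces $x_0=\tfrac{y^{**}+z^{**}}{2}\in K\setminus\tilde S$, contradicting $x_0\in\tilde S$. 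With this one-word correction the argument is complete.
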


Given a Banach space $X$ and a subset $A\subseteq X$, a point $x_0\in A$ is said to be a \textit{point of continuity} if, for every $\varepsilon>0$, there exists a weakly open subset $W\subseteq A$ with $x_0\in W$ and $\diam(W)<\varepsilon$. Observe that this means that the identity mapping $I:(A,w)\longrightarrow (A,\Vert\cdot\Vert)$ is continuous at $x_0$. In turn, this is equivalent to the fact that if a net $(x_s)$ of elements of $A$ satisfies that $x_s\overset{w}{\rightarrow} x_0$, then $\Vert x_s-x_0\Vert\rightarrow 0$. A closed and bounded set $B$ (resp. a closed convex and bounded set $C$) in a Banach space $X$ is said to have the \emph{point of continuity property} (resp. \textit{convex point of continuity property (CPCP)}) if every closed subset $A$ of $B$ (resp. every closed and convex subset $A$ of $C$) contains a point of continuity.

We finally recall the definition of the ``Summing Tree Simplex'' from \cite{aorlecture} that was constructed in order to distinguish between the CPCP and the PCP for subsets of Banach spaces. This set will be the stepping stone for our renorming of $L_\infty[0,1]$. Let $\N^{<\omega}$ be the set of all ordered finite sequences of positive integers including the empty sequence denoted by $\emptyset$. If $\alpha=(\alpha_1,\ldots,\alpha_n)\in\N^{<\omega}$, the length of $\alpha$ is $|\alpha|=n$ and $|\emptyset|=0$. For simplicity, we will sometimes identify $\N^1$ with $\N$ and denote by $i$ the sequence $(i)$ with one element $i\in\N$. We use the natural order in $\N^{<\omega}$ given by:
$$
\alpha\preceq\beta \text{ if } |\alpha|\leq|\beta| \text{ and } \alpha_i=\beta_i \text{ for all } i\in\{1,\ldots,|\alpha|\},
$$
and $\emptyset\preceq\alpha$ for any $\alpha\in \N^{<\omega}$. We denote by $\alpha\smallfrown i$ the finite sequence resulting from the concatenation of an element $\alpha\in\N^{<\omega}$ with the sequence $i=(i)$ with only one element $i\in\N$. 

Let $c_0(\N^{<\omega})$ be the completion of the space $c_{00}(\N^{<\omega})$ of all finitely supported families of real numbers indexed by $\N^{<\omega}$ with the supremum norm. Then $c_0(\N^{<\omega})$ is isometric to the usual space $c_0$. From now on, in order to distinguish with the norm from the space $L_\infty[0,1]$, we will denote by $\norm{.}$ the supremum norm on $c_0$ and $c_0(\N^{<\omega})$, and by $\norm{.}_\infty$ the essential supremum norm on $L_\infty[0,1]$.  Let $(e_\alpha)_{\alpha\in \N^{<\omega}}$ be the unit vector basis of $c_{00}(\N^{<\omega})$ and $(e_\alpha^*)_{\alpha\in \N^{<\omega}}$ be the sequence of biorthogonal functionals. For a given  $\alpha\in \N^{<\omega}$, let \begin{equation*}
    x_\alpha:=\sum_{\beta\preceq \alpha}e_\beta.
\end{equation*} We consider the set \begin{equation*}
    K:=\cconv\{x_\alpha\}_{\alpha\in \N^{<\omega}}\subset S_{c_0}^+.
\end{equation*} Some properties of the set $K$ are given in \cite[Theorem~1.1]{aorlecture}.  In particular, it is proved there that $K$ has the CPCP but fails the PCP. We end the present section by providing a few more properties for $K$. 

\begin{lem}
For every $x\in K$ and for every slice $S$ of $K$, $\sup_{y\in S} \norm{x-y}=1$.
\end{lem}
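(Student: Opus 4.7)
My plan is to prove the two inequalities separately. The upper bound is immediate: since $K\subseteq S_{c_0}^+$, every element of $K$ has non-negative coordinates bounded above by $1$, so $|x(\gamma)-y(\gamma)|\leq\max\{x(\gamma),y(\gamma)\}\leq 1$ for every $\gamma\in\N^{<\omega}$ and every $x,y\in K$, whence $\sup_{y\in S}\|x-y\|_\infty\leq 1$.

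For the lower bound, fix $x\in K$, $\eps\in(0,1)$, and a slice $S=\{y\in K:\varphi(y)>\sup_K\varphi-\delta\}$ with $\varphi\in c_0^*$ and $\delta>0$. The first step is to locate one of the generators inside $S$: since $\varphi$ is linear and continuous and $K=\cconv\{x_\alpha\}_{\alpha\in\N^{<\omega}}$, one has $\sup_K\varphi=\sup_\alpha\varphi(x_\alpha)$, so there exists $\alpha\in\N^{<\omega}$ with $x_\alpha\in S$.

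The key step is to exploit the tree structure. For each $n\in\N$ we have $x_{\alpha\smallfrown n}=x_\alpha+e_{\alpha\smallfrown n}$, and the sequence $(e_{\alpha\smallfrown n})_{n\in\N}$ is weakly null in $c_0$; hence $x_{\alpha\smallfrown n}\overset{w}{\to} x_\alpha$. Since slices are weakly open, $x_{\alpha\smallfrown n}\in S$ for all sufficiently large $n$. On the other hand, as $x\in c_0(\N^{<\omega})$, the coordinates $x(\alpha\smallfrown n)$ tend to $0$. Choosing $n$ large enough so that $x_{\alpha\smallfrown n}\in S$ and $x(\alpha\smallfrown n)<\eps$ simultaneously, evaluation at the coordinate $\alpha\smallfrown n$ yields
$$
\|x-x_{\alpha\smallfrown n}\|_\infty\geq x_{\alpha\smallfrown n}(\alpha\smallfrown n)-x(\alpha\smallfrown n)=1-x(\alpha\smallfrown n)>1-\eps,
$$
so $\sup_{y\in S}\|x-y\|_\infty\geq 1-\eps$. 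Letting $\eps\to 0$ concludes the argument.

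I do not expect any real obstacle here; the only subtle point is the initial observation that any slice of $K$ must already contain a generator $x_\alpha$, after which the weak nullity of the off-branch basis vectors $e_{\alpha\smallfrown n}$ produces new elements $x_{\alpha\smallfrown n}$ of the slice whose only ``new'' nonzero coordinate lies exactly where $x$ is forced to be small by the $c_0$-condition.
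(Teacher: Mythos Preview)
Your proof is correct and follows essentially the same route as the paper's: locate some $x_\alpha$ in the slice (since $K=\cconv\{x_\alpha\}$), use $x_{\alpha\smallfrown n}\overset{w}{\to}x_\alpha$ to push $x_{\alpha\smallfrown n}$ into $S$, and evaluate at the coordinate $\alpha\smallfrown n$. The only (very minor) difference is that the paper first establishes $\lim_n\|z-x_{\alpha\smallfrown n}\|=1$ for $z\in\conv\{x_\alpha\}$ and then passes to general $x\in K$ by a density argument, whereas you bypass this by directly using that $x\in c_0(\N^{<\omega})$ forces $x(\alpha\smallfrown n)\to 0$; your version is slightly cleaner in this respect.
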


\begin{proof} Observe that for every $z\in \conv\{x_\alpha\}_{\alpha\in \N^{<\omega}}$ and for every $\alpha\in \N^{<\omega}$, we have $\lim\norm{z-x_{\alpha\smallfrown n}}=1$. Thus, since every slice of $K$ contains some $x_\alpha$, and since $x_{\alpha\smallfrown n}\to_n x_\alpha$ weakly, the conclusion follows from an easy density argument.
\end{proof}

From the fact that $K$ has the CPCP it is immediate to infer that $K$ contains non-emtpy relatively weakly open subsets of arbitrarily small diameter. However, we will describe a particular family of non-empty relatively weakly open subsets of small diameter because they will be useful in order to localise ccw $\Delta$-points which are not super Daugavet points in the final renorming of $L_\infty[0,1]$ (see Remark \ref{rem:superdeltanotsuperdauga}).

\begin{lem}\label{lem:small_weakly_open_subsets_K}
For $n\in \N$ and $\rho\in(0,1/n)$, let \begin{equation*}
    V_{n,\rho}:=\bigcap_{i=1}^n\{z\in K\colon \ e_i^*(z)>1/n-\rho\}.
\end{equation*}
Then $V_{n,\rho}$ is a non-empty relatively weakly open subset of $K$ with diameter smaller than $2/n+2n\rho$ .
\end{lem}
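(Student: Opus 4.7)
The natural candidate for a centroid of $V_{n,\rho}$ is $z_0 := \frac{1}{n}\sum_{i=1}^{n} x_{(i)}$. Since $e_i^*(z_0) = \frac{1}{n} > \frac{1}{n}-\rho$ for each $i=1,\ldots,n$, the point $z_0$ lies in $V_{n,\rho}$, proving non-emptiness. The set $V_{n,\rho}$ is manifestly relatively weakly open in $K$ as a finite intersection of weak half-spaces. The strategy for the diameter estimate is to show that every $z \in V_{n,\rho}$ lies within distance $\frac{1}{n}+(n-1)\rho$ of $z_0$, so that the triangle inequality yields $\diam(V_{n,\rho}) \leq \frac{2}{n}+2(n-1)\rho < \frac{2}{n}+2n\rho$.

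To obtain the coordinate-wise bound, I would first treat a finite convex combination $z = \sum_\alpha\lambda_\alpha x_\alpha \in V_{n,\rho}$, since arbitrary elements of $V_{n,\rho}$ are norm-limits of such combinations and all the estimates I obtain pass to these limits by continuity of the biorthogonal functionals (each $e_\beta^* \in c_0^* = \ell_1$). For such a convex combination, $z(\beta) = \sum_{\alpha :\, \beta\preceq\alpha}\lambda_\alpha$, so the defining condition $e_i^*(z) > \frac{1}{n}-\rho$ reads $\sum_{\alpha :\, (i)\preceq\alpha}\lambda_\alpha > \frac{1}{n}-\rho$ for each $i = 1,\ldots,n$.

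The combinatorial key is that the sets $\{\alpha :\, (i)\preceq\alpha\}$ for $i = 1,\ldots,n$ are pairwise disjoint (each forces $\alpha_1 = i$). Summing the $n$ inequalities gives that the total mass on $\{\alpha = \emptyset\} \cup \{\alpha :\, \alpha_1 > n\}$ is at most $n\rho$, and subtracting then shows that each individual sum $z((i)) \leq \frac{1}{n}+(n-1)\rho$. Reading off the coordinates of $z$ and comparing with $z_0$ (which has coordinate $1$ at $\emptyset$, $\frac{1}{n}$ at each $(i)$ with $i \leq n$, and $0$ elsewhere), I obtain: $z(\emptyset) = 1 = z_0(\emptyset)$; at $\beta = (i)$, $|z((i))-\frac{1}{n}| \leq n\rho$ (the constraining interval has width $n\rho$); for $\beta$ with $\beta_1 \in \{1,\ldots,n\}$ and $|\beta| \geq 2$, $z(\beta) \leq z((\beta_1)) \leq \frac{1}{n}+(n-1)\rho$; and for $\beta$ with $\beta_1 > n$, $z(\beta) \leq n\rho$. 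Since $\rho < \frac{1}{n}$ we have $n\rho < \frac{1}{n}+(n-1)\rho$, so the supremum of all coordinate differences is at most $\frac{1}{n}+(n-1)\rho$, as claimed.

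The only delicate point is the passage from the combinatorial argument for finite convex combinations to arbitrary elements of $K$, but this is routine thanks to the continuity of each coordinate functional, and since the bound obtained is strictly less than $\frac{2}{n}+2n\rho$ at the finite level, this strict inequality survives after taking norm-limits.
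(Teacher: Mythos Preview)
Your proof is correct and follows essentially the same approach as the paper's: you use the same centroid $z_0=\frac{1}{n}\sum_{i=1}^n x_{(i)}$, reduce to finite convex combinations, exploit the pairwise disjointness of the branches $\{\alpha:(i)\preceq\alpha\}$ to bound the masses, and run the same coordinate-wise case analysis on~$\beta$. Your bound $\tfrac{1}{n}+(n-1)\rho$ is in fact slightly sharper than the paper's $\tfrac{1}{n}+n\rho$ (the paper obtains $\tfrac{1}{n}+(n-1)\rho$ in its Case~2 as well but then relaxes it), and you handle the passage to the closure a bit more explicitly; otherwise the arguments coincide.
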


\begin{proof}
    For $i\in\{1,\dots, n\}$, let $x_i:=x_{(i)}$. Then $x_0:=\frac{1}{n}\sum_{i=1}^n x_i\in V_{n,\rho}$. Clearly, it is enough to prove that for every $z\in \conv\{x_\alpha\}_{\alpha\in \N^{<\omega}}\cap V_{n,\rho}$, $\norm{x_0-z}\leq 1/n+n\rho$. Fix such a $z$, and write $z=\sum_{l=1}^L\lambda_lx_{\alpha_l}$ with $\lambda_l>0$, $\sum_{l=1}^L\lambda_l=1$, and $\alpha_l\in \N^{<\omega}$. For every $i\in\N$, let \begin{equation*}
        A_i:=\{l\colon (i)\preceq \alpha_l\}.
    \end{equation*}
    Since $z\in V_{n,\rho}$, we have $e_i^*(z)=\sum_{l\in A_i}\lambda_l>1/n-\rho$ for every $i\leq n$. So observe that for any given $j\in\{1,\dots, n\}$, we have \begin{equation*}
     \sum_{l\in A_j}\lambda_l =\sum_{i=1}^n\sum_{l\in A_i}\lambda_l-\sum_{\substack{i=1\\i\neq j}}^n\sum_{l\in A_i}\lambda_l \leq 1-(n-1)(1/n-\rho)=1/n+(n-1)\rho.
    \end{equation*} In the same way, \begin{equation*}
     \sum_{i>n}\sum_{l\in A_i}\lambda_l \leq \sum_{i\in\N}\sum_{l\in A_i}\lambda_l-\sum_{i=1}^n\sum_{l\in A_i}\lambda_l \leq 1-n(1/n-\rho)=n\rho.
    \end{equation*} Now let us define $v=x_0-z=\frac{1}{n}\sum_{i=1}^nx_i-\sum_{l=1}^L\lambda_lx_{\alpha_l}$ and let us fix $\beta\in \N^{<\omega}$. We want to evaluate $\abs{v(\beta)}$. There are three cases to consider. 
    
    \textbf{Case~1.}  If $\beta=\emptyset$, then $v(\beta)=0$, so there is nothing to do.
    
    \textbf{Case~2.} If $\abs{\beta}>1$, take $j_\beta\in \mathbb N$ such that $(j_\beta)\preceq \beta$.
    Then, either there is no $l\in \{1,\ldots,L\}$ such that $(j_\beta)\preceq \alpha_l$ in which case $v(\beta)=0$, or there is an $l\in \{1,\ldots,L\}$ such that $(j_\beta)\preceq \alpha_l$, and $\abs{v(\beta)} = \abs{\sum_{l=1}^L\lambda_lx_{\alpha_l}(\beta)}\leq \sum_{l\in A_{j_\beta}}\lambda_l$. Hence $\abs{v(\beta)}\leq \max\{1/n+(n-1)\rho, n\rho\}\leq 1/n+n\rho$. 
    
    \textbf{Case~3.} If $\beta=(j_\beta)$ for some $j_\beta \in \mathbb N$, then either $j_\beta>n$, and $\abs{v(\beta)}\leq \sum_{l\in A_{j_\beta}}\lambda_l\leq n\rho$, or $j_\beta\leq n$, and $\abs{v(\beta)}=\abs{1/n-\sum_{l\in A_{j_\beta}}\lambda_l}\leq n\rho$ because $1/n-\rho<\sum_{l\in A_{j_\beta}}\lambda_l\leq 1/n+(n-1)\rho$. 
    
    It follows that $\norm{x_0-z}=\sup_{\beta\in \N^{<\omega}}\abs{v(\beta)}\leq 1/n+n\rho$, as we wanted.
\end{proof}

For easy future reference, we end the section with the following lemma, whose proof follows from \cite[P.82]{aorlecture}.

\begin{lem}
    Every $x_\alpha$ is a preserved extreme point of $K$.
\end{lem}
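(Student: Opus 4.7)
The plan is to use characterisation (3) of Proposition~\ref{prop:carapreext}: I will show that whenever two nets $(y_s)$ and $(z_s)$ in $K$ satisfy $\frac{y_s+z_s}{2}\to x_\alpha$ weakly, we must have $y_s\to x_\alpha$ weakly. The key structural observation I will exploit is that every generator $x_\beta$ has coordinates in $\{0,1\}$, so the closed convex hull $K$ sits coordinatewise in $[0,1]^{\N^{<\omega}}$. In particular, for every $\beta\in\N^{<\omega}$ and every $y\in K$, one has $e_\beta^*(y)\in [0,1]$.

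First I would test the weak convergence of the midpoints against the biorthogonal functionals $e_\beta^*\in\ell_1=c_0^*$, which gives the pointwise convergence $\frac{y_s(\beta)+z_s(\beta)}{2}\to x_\alpha(\beta)$ for every $\beta$. Since $x_\alpha(\beta)\in\{0,1\}$ and $y_s(\beta),z_s(\beta)\in[0,1]$, it follows immediately that $y_s(\beta)\to x_\alpha(\beta)$ and $z_s(\beta)\to x_\alpha(\beta)$ for every $\beta\in\N^{<\omega}$. Indeed, in the case $\beta\preceq \alpha$ the limit $1$ is extremal for $[0,1]$, and in the case $\beta\not\preceq\alpha$ the limit $0$ is extremal, so in both situations the two non-negative terms are forced to converge individually.

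It remains to upgrade this bounded coordinatewise convergence to weak convergence in $c_0$. For any $f=\sum_{\gamma}a_\gamma e_\gamma^*\in\ell_1$ and any $\varepsilon>0$, I will choose a finite $F\subseteq \N^{<\omega}$ with $\sum_{\gamma\notin F}\abs{a_\gamma}<\varepsilon$ and split
\[
f(y_s)-f(x_\alpha)=\sum_{\gamma\in F}a_\gamma\bigl(y_s(\gamma)-x_\alpha(\gamma)\bigr)+\sum_{\gamma\notin F}a_\gamma\bigl(y_s(\gamma)-x_\alpha(\gamma)\bigr).
\]
The tail is bounded by $2\varepsilon$ using $\norm{y_s}_\infty,\norm{x_\alpha}_\infty\leq 1$. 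For the head, the set $F$ is finite and the net is directed, so after finitely many refinements the pointwise convergence on each coordinate of $F$ forces this finite sum to be $<\varepsilon$ eventually. Hence $f(y_s)\to f(x_\alpha)$, and since $f\in\ell_1$ was arbitrary, $y_s\to x_\alpha$ weakly, concluding that $x_\alpha$ is preserved extreme.

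I do not anticipate any real obstacle here: the proof is essentially forced by the combination of the non-negativity and $0$--$1$ structure of the generators of $K$ with the classical fact that on a bounded subset of $c_0$ the weak topology coincides with the coordinatewise topology. The only point to handle with a bit of care is that Proposition~\ref{prop:carapreext}~(3) is stated for nets rather than sequences, so one must justify the last step for nets; this is straightforward because the approximation argument above only uses directedness and the uniform bound on $\norm{y_s}_\infty$.
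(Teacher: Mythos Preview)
Your proof is correct. The paper itself does not give a proof of this lemma; it only says that the result follows from \cite[P.~82]{aorlecture}. Your argument via characterisation~(3) of Proposition~\ref{prop:carapreext}, exploiting that every coordinate of $x_\alpha$ is an extreme point of $[0,1]$ and that bounded coordinatewise convergence in $c_0$ is weak convergence, is exactly the natural direct argument and is the one implicitly used later in the paper (see the proof of Proposition~\ref{prop:u_alpha_diametral}, where the preserved-extreme property of $x_\alpha$ is invoked precisely through this midpoint criterion).
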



\section{Main result}\label{sec:main_result}

The aim of the section is to prove Theorem~\ref{thm:main_theorem}. Let $\{A_\alpha\}_{\alpha\in \N^{<\omega}}$ be a family of pairwise disjoint non-empty open subsets of $[1/2,1]$. Then, for every $\alpha\in \N^{<\omega}$, let $f_\alpha:=\1_{A_\alpha}$ and let $E_\alpha$ be the norm one functional on $L_\infty[0,1]$ given by \begin{equation*}E_\alpha(f):=1/\mu(A_\alpha)\cdot\int_{A_\alpha}f d\mu
 \end{equation*} for every $f\in L_\infty[0,1]$. We will use the $f_\alpha$ to construct a positive isometric embedding of $c_0(\N^{<\omega})$ into $L_\infty[0,1]$. For every $z:=(z_\alpha)_{\alpha\in \N^{<\omega}}\in c_{00}(\N^{<\omega})$, we define \begin{equation*}
    \Phi(z):= \sum_{\alpha \in \N^{<\omega}} z_\alpha f_\alpha.
\end{equation*} By the disjointness of the support of the functions $f_\alpha$, this map is clearly isometric and sends positive sequences to positive functions. Thus it can be extended by density to a positive isometric embedding $\Phi\colon c_0(\N^{<\omega})\to L_\infty[0,1]$. Furthermore, observe that, by construction, $\Phi$ satisfies the equation \begin{equation}\label{transfer_eqt}
 E_\alpha\circ\Phi=e^*_\alpha,
 \end{equation} that is $E_\alpha(\Phi(z))=z_\alpha$ for every $z:=(z_\alpha)_{\alpha\in \N^{<\omega}}\in c_0(\N^{<\omega})$.
 
Let $K_0:=\Phi(K)$ be the image of the subset $K$ of $c_0(\N^{<\omega})$ from the preliminary section, and fix $\eps\in(0,1)$. We consider the equivalent norm $\nnorm{\cdot}_\eps$ on $L_\infty[0,1]$ given by the Minkowski functional of the set \begin{equation*}
B_\eps:= \cconv\big((2K_0-\1) \cup (-2K_0+\1) \cup \big((1-\varepsilon) B_{L_\infty[0,1]} + \varepsilon B_{\ker(E_0)} \big)\big),
 \end{equation*} where $\1:=\1_{[0,1]}$ and $E_0$ is the norm one functional on $L_\infty[0,1]$ given by $E_0(f):=4\cdot\int_0^{1/4}f d\mu$ for every $f\in L_\infty[0,1]$. Observe that $(1-\eps)B_{L_\infty[0,1]}\subset B_\eps\subset B_{L_\infty[0,1]}$, which means \begin{equation*}
     (1-\eps)\nnorm{\cdot}_\eps\leq \norm{\cdot}_{\infty}\leq\nnorm{\cdot}_\eps.
 \end{equation*} We will prove that this renorming of $L_\infty[0,1]$ satisfies all the properties of Theorem~\ref{thm:main_theorem}.
 
 Let $A:=(2K_0-\1)$, $B:=(1-\varepsilon) B_{L_\infty[0,1]} + \varepsilon B_{\ker(E_0)}$ and $A_\eps:=A\cup -A\cup B$. For every $\alpha\in\N^{<\omega}$, let $h_\alpha:=\sum_{\beta\preceq\alpha}f_\beta$ and $u_\alpha:=2h_\alpha-\1$. Observe that $A=\cconv\{u_\alpha\}_{\alpha\in \N^{<\omega}}$, and that $E_0(\psi)=-1$ for every $\psi\in A$ and $E_0(\varphi)\in [-1+\eps,1-\eps]$ for every $\varphi\in B$. We will start by proving that our renorming produces weakly open subsets of arbitrarily small diameter in the new unit ball. 
 
\begin{prop}\label{prop:small_weakly_open_subsets_B_eps}
 The set $B_\varepsilon$ admits non-empty relatively weakly open subsets of arbitrarily small diameter.
\end{prop}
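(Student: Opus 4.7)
The plan is to exhibit, for any $\eta>0$, a non-empty relatively weakly open subset $W$ of $B_\varepsilon$ defined by finitely many evaluation functionals. Concretely, for suitable $n\in\N$ large, $\rho\in(0,1/n)$ small, and $\delta>0$ small (chosen at the end), I take
\[
W:=\{g\in B_\varepsilon : g(t_0)<-1+\delta \text{ and } g(t_{(i)})>\tfrac{2}{n}-1-2\rho-\delta \text{ for } i=1,\ldots,n\}.
\]
Non-emptiness is witnessed by $g_0:=2\Phi\bigl(\tfrac{1}{n}\sum_{i=1}^{n}x_{(i)}\bigr)-\1\in A\subset B_\varepsilon$: since $f_\alpha(t_0)=0$ for every $\alpha\in\N^{<\omega}$ (as $t_0\notin V_\alpha$), we have $g_0(t_0)=-1$; and by Remark~\ref{rem:value_at_t_beta}, $g_0(t_{(i)})=\tfrac{2}{n}-1$ for $i=1,\ldots,n$.

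The key observation is that the functional $\delta_{t_0}$ essentially separates the three generating sets of $B_\varepsilon$: every $a\in A$ satisfies $a(t_0)=-1$ (by the computation above), every $-a\in -A$ satisfies $(-a)(t_0)=1$, and every $b\in B$ satisfies $\abs{b(t_0)}\leq 1-\varepsilon$ (since $b$ decomposes as $(1-\varepsilon)u+\varepsilon v$ with $v(t_0)=0$). Hence, for a convex combination $g=\lambda a+\mu(-a')+\nu b\in\conv(A\cup -A\cup B)\cap W$, the evaluation at $t_0$ gives $g(t_0)\geq -1+2\mu+\nu\varepsilon$, and the first defining inequality forces $\mu<\delta/2$ and $\nu<\delta/\varepsilon$. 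In particular, $\lambda$ is forced close to $1$.

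Next, writing $a=2k-\1$ with $k\in K_0$, the remaining inequalities $g(t_{(i)})>\tfrac{2}{n}-1-2\rho-\delta$, together with the trivial bounds $k'(t_{(i)})\geq 0$ and $b(t_{(i)})\leq 1$, yield after a short computation $k(t_{(i)})>\tfrac{1}{n}-\rho-C_\varepsilon\delta$ for every $i=1,\ldots,n$ and some constant $C_\varepsilon$ depending only on $\varepsilon$. Setting $\rho':=\rho+C_\varepsilon\delta$, this means exactly $\Phi^{-1}(k)\in V_{n,\rho'}$, so Lemma~\ref{lem:small_weakly_open_subsets_K} (applied in $K$ and transferred via the isometry $\Phi$) bounds the norm diameter of the corresponding set of $a$'s by $\tfrac{4}{n}+4n\rho'$. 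Splitting $\norm{g_1-g_2}$ into the $A$-contribution (controlled by the above), the $-A$-contribution (bounded by $\mu_1+\mu_2<\delta$) and the $B$-contribution (bounded by $\nu_1+\nu_2<2\delta/\varepsilon$), and accounting for the $\abs{\lambda_1-\lambda_2}$ terms, one obtains a bound of the shape
\[
\norm{g_1-g_2}\leq \tfrac{4}{n}+4n\rho+C'_\varepsilon(1+n)\delta.
\]
Given $\eta>0$, first pick $n$ large and $\rho$ small with $\tfrac{4}{n}+4n\rho<\eta/2$, and then $\delta>0$ small enough (depending on $n$ and $\varepsilon$) to absorb the remaining term below $\eta/2$. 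Finally, a general $g\in W$ is approximated in norm by elements of $\conv(A\cup -A\cup B)$ (dense in $B_\varepsilon$), and since $W$ is weakly open these approximants eventually lie in $W$, so the diameter bound extends to all of $W$.

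The main obstacle is purely bookkeeping: tracking the interaction between $n$, $\rho$, $\delta$, and $\varepsilon$ carefully enough to guarantee that $\delta$ can still be chosen small once $n$ is fixed and large. Conceptually, the proof uses only two ingredients: the separating role of $\delta_{t_0}$ built into the definition of $B_\varepsilon$, and the existence of weakly open subsets of $K$ of arbitrarily small diameter provided by Lemma~\ref{lem:small_weakly_open_subsets_K}.
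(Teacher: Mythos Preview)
Your proof is correct and follows essentially the same approach as the paper's: define a weakly open set by constraints on $\delta_{t_0}$ and on $\delta_{t_{(1)}},\dots,\delta_{t_{(n)}}$, use the first constraint to force the $-A$ and $B$ components of a convex decomposition to be small, then use the remaining constraints to push the $A$-component into a set controlled by Lemma~\ref{lem:small_weakly_open_subsets_K}. The only cosmetic differences are that the paper uses a single parameter $\rho$ where you use two ($\delta$ and $\rho$), and the paper bounds the distance from each point to the fixed centre $f_0$ rather than bounding pairwise distances directly.
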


\begin{proof}
For $n\in \N$ and $\rho >0$, consider 
\begin{equation*}
    \tilde V_{n,\rho}:= \left\{ f \in B_\varepsilon : E_0(f)<-1+\rho\ \text{and}\ E_i(f) > 2\left(\frac{1}{n}-\rho \right)-1  \text{ for every }i\in \{1,...,n\} \right\},
\end{equation*}
where $i$ stands the sequence $(i)$. Note that for $x_0:=\frac{1}{n} \sum_{i=1}^n x_i$, we have that $f_0:=2 \Phi(x_0) -\mathds{1} \in \tilde V_{n,\rho}$. By density, it is enough to  find an upper bound for the distance of $f$ to $f_0$ for every $f\in \tilde V_{n,\rho} \cap \co A_\varepsilon$. So pick such an $f$, and write
\begin{equation*}
    f= \lambda_1 f_1 + \lambda_2 f_2 + \lambda_3 f_3 
\end{equation*}
with $\lambda_1,\lambda_2,\lambda_3\in [0,1]$, $\sum_{i=1}^3\lambda_i=1$, $f_1\in A$, $f_2\in -A$ and $f_3 \in B$.
Then, evaluating against the functional $E_0$, we get \begin{equation*}
    -1+\rho>E_0(f)\geq-\lambda_1+\lambda_2-(1-\eps)\lambda_3 \geq -\lambda_1-(1-\eps)\lambda_3,
\end{equation*} hence \begin{equation*}
    1-\rho<\lambda_1+(1-\eps)\lambda_3.
\end{equation*} In particular, \begin{equation*}
    1-\rho<\lambda_1+\lambda_3=1-\lambda_2,
\end{equation*} and we get $\lambda_2<\rho$. Furthermore, since \begin{equation*}
    \lambda_1=1-\lambda_2-\lambda_3 \leq 1-\lambda_3,
\end{equation*} we have \begin{equation*}
    1-\rho<1-\lambda_3+(1-\eps)\lambda_3,
\end{equation*} and thus $\lambda_3<\rho/\eps$. Finally \begin{equation*}
    \lambda_1=1-\lambda_2-\lambda_3>1-(1+1/\eps)\rho.
\end{equation*} 

It follows that \begin{equation*}
\nnorm{f-f_1}_\eps\leq (1-\lambda_1)\nnorm{f_1}_\eps+\lambda_2\nnorm{f_2}_\eps+\lambda_3\nnorm{f_3}_\eps < 2(1+1/\eps)\rho.
    \end{equation*}
Write $f_1:=2\Phi(z)-\mathds{1}$ with $z\in K$. For every $i\in\{1,\dots, n\}$, we have \begin{equation*}
2(1/n-\rho )-1< E_i(f)=\lambda_1E_i(f_1)+\lambda_2E_i(f_2)+\lambda_3E_i(f_3)<E_i(f_1)+2(1+1/\eps)\rho,
\end{equation*}
thus
\begin{equation*}
    z_i=E_i(\Phi(z))=\frac{E_i(f_1)+1}{2}>\frac{1}{n}-(2+1/\eps)\rho,
\end{equation*}
which means $z\in V_{n,\tilde{\rho}}$ for $\tilde{\rho}=(2+1/\eps)\rho$. 
So by Lemma~\ref{lem:small_weakly_open_subsets_K}, we get \begin{equation*}
    \nnorm{f_1-f_0}_\eps=2\nnorm{\Phi(x_0)-\Phi(z)}_\eps\leq \frac{2}{1-\eps}\norm{\Phi(x_0)-\Phi(z)}_\infty=\frac{2}{1-\eps}\norm{x_0-z}\leq \frac{4/n+4n\tilde{\rho}}{1-\eps}.
\end{equation*} The conclusion follows.
\end{proof}

Actually, we can say a bit more in that regard: the set $B_\eps$ admits points of continuity. Indeed, the latter claim immediately follows from the fact that the set $K$ itself admits points of continuity (it has the CPCP, see \cite[Theorem~1.1(c)]{aorlecture}) together with the following result. 

 \begin{prop}\label{prop:PC_transfer}
    Let $z$ be a point of continuity of $K$. Then $2\Phi(z)-\1$ is a point of continuity of $B_\eps$.
\end{prop}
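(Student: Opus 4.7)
The plan is to adapt the argument of Proposition~\ref{prop:small_weakly_open_subsets_B_eps} by enriching its weakly open witness with finitely many constraints arising from the point-of-continuity property of $z$ in $K$. Given $\eta>0$, I would exhibit a weakly open set $W\subset B_\eps$ containing $f:=2\Phi(z)-\1$ whose $\nnorm{\cdot}_\eps$-diameter is at most $\eta$. Note first that $f(t_0)=-1$: by the disjointness condition $\overline{V_0}\cap\overline{V_\alpha}=\emptyset$, each $f_\alpha$ vanishes at $t_0$, hence $\Phi(z)(t_0)=0$.

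Using that $z$ is a point of continuity of $K$, fix a basic weak neighborhood $V=\{y\in K:\ |e_{\alpha_i}^*(y)-e_{\alpha_i}^*(z)|<\eta_i,\ i=1,\dots,N\}$ of $z$ with $\diam_{\|\cdot\|_\infty}(V)<\eta(1-\eps)/4$, and put
\[
W:=\bigl\{h\in B_\eps:\ h(t_0)<-1+\delta\ \text{and}\ |h(t_{\alpha_i})-f(t_{\alpha_i})|<\eta_i'\ \text{for}\ i=1,\dots,N\bigr\},
\]
for constants $\delta,\eta_i'>0$ to be fixed below. By the weak continuity of the evaluation functionals at $t_0$ and at the $t_{\alpha_i}$, the set $W$ is weakly open, and $f\in W$ trivially.

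For $g\in W\cap \co A_\eps$, write $g=\lambda_1 f_1+\lambda_2 f_2+\lambda_3 f_3$ with $f_1\in A$, $f_2\in -A$, $f_3\in B$. Exactly the computation of Proposition~\ref{prop:small_weakly_open_subsets_B_eps} applied to $g(t_0)<-1+\delta$ gives $\lambda_2+\lambda_3<(1+1/\eps)\delta$, whence $\nnorm{g-f_1}_\eps\leq 2(\lambda_2+\lambda_3)<2(1+1/\eps)\delta$. Writing $f_1=2\Phi(w)-\1$ with $w\in K$ and using Remark~\ref{rem:value_at_t_beta} to identify $\Phi(w)(t_{\alpha_i})=w_{\alpha_i}$ and $\Phi(z)(t_{\alpha_i})=z_{\alpha_i}$, the second family of conditions in $W$ combined with the bound on $\lambda_2+\lambda_3$ translates to $|w_{\alpha_i}-z_{\alpha_i}|<\eta_i'/2+(1+1/\eps)\delta$ for every $i$. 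Choosing the $\eta_i'$ and $\delta$ small enough so that the right-hand side is below $\eta_i$ forces $w\in V$, so $\|w-z\|_\infty<\eta(1-\eps)/4$; the isometric property of $\Phi$ together with $\nnorm{\cdot}_\eps\leq (1-\eps)^{-1}\|\cdot\|_\infty$ then yields $\nnorm{f_1-f}_\eps\leq \eta/2$.

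Combining these estimates, $\nnorm{g-f}_\eps<2(1+1/\eps)\delta+\eta/2<\eta$ once $\delta$ is further taken small. The bound transfers from $W\cap \co A_\eps$ to $W\cap B_\eps$ by the standard density argument: $\co A_\eps$ is $\nnorm{\cdot}_\eps$-dense in $B_\eps$, and norm-small perturbations of any $g\in W\cap B_\eps$ remain inside the norm-open set $W$. The main obstacle is not really conceptual but combinatorial: to thread the parameters $\delta$, $\eta_i'$ and $\eta_i$ coherently so that the three separate chains of estimates (on $\lambda_2+\lambda_3$, on $|w_{\alpha_i}-z_{\alpha_i}|$, and on $\nnorm{f_1-f}_\eps$) close up consistently.
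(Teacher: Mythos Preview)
Your argument is correct. The one point that deserves a word of justification is your choice of $V$ as a neighborhood determined by finitely many coordinate functionals $e_{\alpha_i}^*$: a priori, weak neighborhoods in $c_0(\N^{<\omega})$ involve arbitrary elements of $\ell_1(\N^{<\omega})$. However, since $\N^{<\omega}$ is countable, the dual is separable, so the weak topology on the bounded set $K$ is metrizable; as weak convergence on bounded subsets of $c_0$ coincides with coordinate-wise convergence, the two metrizable topologies agree, and your basic sets do form a neighborhood basis. With this in hand, the parameter threading you describe goes through exactly as written (the final bound gives $\diam W\leq 2\eta$ rather than $\eta$, but $\eta$ is arbitrary).

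Your route differs from the paper's in execution rather than in spirit. The paper uses the net characterization of points of continuity from Proposition~\ref{prop:carapreext}-style arguments: it takes an arbitrary net $(f_s)\subset\conv A_\eps$ with $f_s\to f$ weakly, evaluates at $t_0$ to force $\lambda_s^1\to 1$, deduces $f_s^1\to f$ weakly, transfers through the isometry $\Phi$ to get $z_s\to z$ weakly in $K$, invokes the PC property to upgrade to norm convergence, and transfers back. This avoids all explicit parameter bookkeeping. Your approach instead builds the witnessing weakly open set by hand, pulling back a coordinate neighborhood of $z$ through $\Phi$ and augmenting it with the $t_0$-constraint; this is more constructive and reuses the estimates of Proposition~\ref{prop:small_weakly_open_subsets_B_eps} verbatim, at the cost of the combinatorics you flag. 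Both arguments rest on the same two facts: evaluation at $t_0$ isolates the $A$-component of any convex decomposition, and $\Phi$ transports the PC structure of $K$ into $A$.
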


\begin{proof}
Let $z\in K$ be a point of continuity. To show that $f:=2\Phi(z)-\1$ is a point of continuity of $B_\eps$, it is enough by density to prove that for every net $(f_s)$ in $\conv{A_\eps}$, we have that $f_s\to f$ weakly if and only if $\nnorm{f-f_s}_\eps\to 0$. So consider such a net, and for every $s$, write \begin{equation*}
    f_s= \lambda_s^1 f_s^1 + \lambda_s^2 f_s^2 + \lambda_s^3 f_s^3 
\end{equation*}
with $\lambda_s^1,\lambda_s^2,\lambda_s^3\in [0,1]$, $\sum_{i=1}^3\lambda_s^i=1$, $f_s^1\in A$, $f_s^2\in -A$ and $f_s^3 \in B$. If $f_s\to f$ weakly, then \begin{equation*}
E_0(f_s)\to E_0(f)=-1.
\end{equation*} Now since $E_0(f_s^1)=-1$, $E_0(f_s^2)=1$ and $E_0(f_s^3)\in[-1+\eps,1-\eps]$ for every $s$, it immediately follows that \begin{equation*}
\lambda_s^1\to 1 \text{ and } \lambda_s^2,\lambda_s^3\to 0.
\end{equation*} From the above we conclude that $f_s^1\to f$ weakly. For every $s$, pick $z_s\in K$ such that $f_s^1=2\Phi(z_s)-\1$.  Then $\Phi(z_s)\to \Phi(z)$ weakly, and since $\Phi$ is a linear isometry, we get that $z_s\to z$ weakly. But $z$ is a point of continuity of $K$, so it follows that $\norm{z-z_s}\to 0$. Going back to $L_\infty[0,1]$, we get that $\nnorm{\Phi(z)-\Phi(z_s)}_\eps\to 0$, and thus $\nnorm{f-f_s}_\eps\to 0$, as we wanted. 
\end{proof} 

We will now prove that the set of Daugavet points of $B_\eps$ is weakly dense. The following approximation lemma will be useful throughout the rest of the article. 

\begin{lem}\label{lem:-1;1_weak_approximations_in_B}
    Let $\varphi\in B$ and let $(W_i)_{i\in I}$ be a collection of pairwise essentially disjoint measurable subsets of $[1/2,1]$ with non-zero measures. Then there exists a sequence $(\varphi_n)\subset B$ such that $\varphi_n\to \varphi$ weakly and such that the sets \begin{equation*}
    \{t\in W_i:\ \varphi_n(t)=1\} \text{ and } \{t\in W_i:\ \varphi_n(t)=-1\} 
    \end{equation*} have positive measure for every $i\in I$ and every $n\in\N$.
\end{lem}

\begin{proof}

For every $i\in I$, let $(B_i^n)$ and $(C_i^n)$ be sequences of measurable subsets of $W_i$ with positive measures such that any two distinct sets in these families are essentially disjoint (the existence of these subsets immediately follows from the fact that $\mu$ is $\sigma$-finite and purely non-atomic). Then set $b_i^n:=\1_{B_i^n}$ and $c_i^n:=\1_{C_i^n}$, and write $\varphi=(1-\eps)f+\eps g$ with $f\in B_{L_\infty[0,1]}$ and $g\in B_{\ker{(E_0)}}$. We define \begin{equation*}
    f_n:=f \cdot \left(\1-\sum_{i\in I}(b_i^n+c_i^n)\right)+\sum_{i\in I}(b_i^n-c_i^n) \text{ and } g_n:=g \cdot \left(\1-\sum_{i\in I}(b_i^n+c_i^n)\right)+\sum_{i\in I}(b_i^n-c_i^n).
\end{equation*} Note that these functions are well defined because the $B_i^n$ and $C_i^n$ are pairwise essentially disjoint. Furthermore, we have that $f_n=f$ and $g_n=g$ a.e. on $[0,1 ]\backslash\left(\bigcup_{i\in I} (B_i^n\cup C_i^n)\right)$, $\norm{f_n}_\infty, \norm{g_n}_\infty\leq 1$, and $f_n$ and $g_n$ are equal to $1$ a.e. on $B_i^n$ and to $-1$ a.e. on $C_i^n$. So $f_n\in B_{L_\infty[0,1]}$ and $g_n\in B_{\ker{(E_0)}}$, and calling to the weak convergence criteria (Theorem~\ref{thm:weakly_null_sequences_L_infty}), we get that $f_n\to f$ weakly and $g_n\to g$ weakly. Thus $\varphi_n=(1-\eps)f_n+\eps g_n$ does the job. 
\end{proof}

\begin{prop}\label{prop:weakly_dense_Daugavet_points}
Let $E$ be the set of all functions $u$ in $B_\eps$ of the form $u=\theta\lambda\psi+(1-\lambda)\varphi$, where $\theta,\lambda,\psi, \varphi$ satisfy the following conditions: \begin{enumerate}
    \item $\theta\in\{-1,1\}$ and $\lambda\in[0,1]$;

    \item $\psi=2h-\1$, where $h\in K_0$ is the image of a finitely supported element of $K$;

    \item $\varphi\in B$ and, for every $\alpha\in \mathbb N^{<\omega}$, the sets $\{t\in U_\alpha: \varphi(t)=1\}$ and $\{t\in U_\alpha: \varphi(t)=-1\}$ have positive measure.
\end{enumerate}
    Then $E$ is weakly dense in $B_\eps$. Furthermore, every $u\in E$ is a Daugavet point in $(L_\infty([0,1]),\nnorm{\cdot}_\eps)$.
\end{prop}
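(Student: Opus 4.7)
Plan: The proposition has two parts -- weak density of $E$ in $B_\eps$, and the Daugavet property of each $u\in E$ -- which I handle separately.

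\textbf{Weak density.} Since $\conv(A\cup (-A)\cup B)$ is norm-dense in $B_\varepsilon$, it suffices to weakly approximate every $u'=\lambda_1 f_1+\lambda_2 f_2+\lambda_3 f_3$ (with $f_1\in A$, $f_2\in -A$, $f_3\in B$) by elements of $E$. My first step is to reduce the three-term combination to a two-term form. The key observation is that $f_1(t_0)=-1$ and $f_2(t_0)=+1$ by construction, so $(f_1+f_2)/2$ vanishes at $t_0$, hence lies in $B_{\ker\delta_0}\subseteq B$. Assuming $\lambda_1\geq\lambda_2$ (the other case being symmetric), I rewrite
\[
u'=(\lambda_1-\lambda_2)f_1+(2\lambda_2+\lambda_3)\tilde\varphi,
\]
where $\tilde\varphi\in B$ is the convex combination of $(f_1+f_2)/2$ and $f_3$. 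This exhibits $u'$ in the form $\theta\lambda\psi+(1-\lambda)\varphi$ with $\theta=1$, $\psi\in A$, $\varphi\in B$. Then I approximate $\psi$ in norm by $2\Phi(z_n)-\1$ with $z_n\in\conv\{x_\alpha\}$ (using norm-density of $\conv\{x_\alpha\}$ in $K$ and the fact that $\Phi$ is an isometry), and approximate $\varphi$ weakly by functions in $B$ taking values $\pm 1$ on every $U_\alpha$, via Lemma~\ref{lem:-1;1_weak_approximations_in_B} applied to the pairwise disjoint family $(U_\alpha)_{\alpha\in\N^{<\omega}}\subseteq\Omega\setminus V_0$ with sequences of distinct points in each $U_\alpha$ provided by perfectness of $\Omega$.

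\textbf{Daugavet property.} Fix $u=\theta\lambda\psi+(1-\lambda)\varphi\in E$ with $\psi=2\Phi(z)-\1$ for finitely supported $z$, a slice $S=\{f\in B_\eps : x^*(f)>M-\delta\}$ where $M=\sup_{B_\eps}x^*$, and $\eps'>0$. The strategy is to exhibit $v\in S$ and a ``conflict point'' $q$ with $v(q)\approx\theta$ while $u(q)=-\theta$, yielding $\nnorm{v-u}_\eps\geq\norm{v-u}_\infty\geq 2-\eps'$ (using $\nnorm{\cdot}_\eps\geq\norm{\cdot}_\infty$). The finite support of $z$ forces $\psi\equiv -1$ on $U_\alpha$ for every $\alpha\notin\supp(z)$, and the defining property of $\varphi$ then provides $q\in U_\alpha$ with $\varphi(q)=-\theta$ (hence $u(q)=-\theta$) for every such $\alpha$. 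Writing $M=\max(M_A,M_{-A},M_B)$ for the corresponding restricted suprema, in the cases $M=M_A$ and $M=M_{-A}$ the slice contains some $\pm u_{\alpha_i}$ approximating $M$; depending on $\theta$, I either use this element directly or pass to $\pm u_{\alpha_i\smallfrown n}$ for large $n$ (weakly convergent to $\pm u_{\alpha_i}$, hence eventually in $S$) in order to reach a new branch $\alpha_i\smallfrown n\notin\supp(z)$ where $v$ takes the required extreme value $\theta$. In the case $M=M_B$, the slice contains some $\varphi^*\in B$ with $x^*(\varphi^*)$ close to $M$, and I apply Lemma~\ref{lem:-1;1_weak_approximations_in_B} to $\varphi^*$ with sequences of distinct points drawn from a small open neighborhood of a fixed point in $U_\alpha$ where $\varphi$ attains the value $-\theta$ (so that $\varphi$ remains close to $-\theta$ on this neighborhood by continuity); for $n$ large, the produced $\varphi_n$ lies in $S$ by weak convergence and takes value $\theta$ at one of the prescribed points, producing the desired conflict up to arbitrarily small error.

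The main obstacle is this final case $M=M_B$: Lemma~\ref{lem:-1;1_weak_approximations_in_B} yields only weak approximants with values prescribed at points varying with $n$ rather than at a single fixed point, so I must pass through a small open neighborhood of a distinguished extremal point of $\varphi$ and exploit continuity of $\varphi$ to estimate the error at the conflict point. The reduction step in the density part also requires some care to verify in the symmetric case $\lambda_2>\lambda_1$, which produces $\theta=-1$.
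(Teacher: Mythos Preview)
Your proof is correct and follows the paper's overall structure closely; the one notable difference is in the weak-density reduction. The paper invokes \cite[Lemma~2.4]{blradv} (using $\frac{A-A}{2}\subset B_{\ker(\delta_0)}\subset B$) to obtain $\conv(A\cup -A\cup B)=\conv(A\cup B)\cup\conv(-A\cup B)$, whereas you perform this reduction by hand: given $\lambda_1 f_1+\lambda_2 f_2+\lambda_3 f_3$ with $\lambda_1\geq\lambda_2$, you absorb $\lambda_2(f_1+f_2)=2\lambda_2\cdot\frac{f_1+f_2}{2}$ into the $B$-term, using that $\frac{f_1+f_2}{2}\in B_{\ker(\delta_0)}\subset B$ and that $B$ is convex. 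This is exactly the mechanism behind the cited lemma, so your argument is a self-contained replacement for the external reference. For the Daugavet part, your organization (keeping general $\theta$ and splitting cases via $M=\max(M_A,M_{-A},M_B)$) is a cosmetic rearrangement of the paper's three-case analysis after reducing to $\theta=1$; the estimates at the conflict points and the use of Lemma~\ref{lem:-1;1_weak_approximations_in_B} with a small neighborhood in $U_\alpha$ are identical. One small remark: in Case~3 you write that ``$\varphi$ remains close to $-\theta$'' on the neighborhood, but what you actually need (and what follows immediately since $\psi\equiv-1$ on $U_\alpha$) is that $u$ remains close to $-\theta$ there; the paper phrases this directly in terms of $u$.
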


\begin{proof}
Let us first prove that the set $E$ is weakly dense in $B_\eps$. Since the sets $A$ and $B$ are convex, and since $\frac{A-A}{2}\subset B_{\ker(E_0)}\subset B$, it follows from \cite[Lemma~2.4]{blradv} that  \begin{equation*}
       \conv{(A\cup -A\cup B)}=\conv{(A\cup B)}\cup\conv{(-A\cup B)}.
\end{equation*} Hence, by symmetry, it is sufficient to prove that every element of $\conv{(A\cup B)}$ is the weak limit of a sequence in $E$. Let $u=\lambda \psi+(1-\lambda)\varphi$ with $\lambda\in[0,1]$, $\psi\in A$ and $\varphi\in B$. First, write $\psi=2h-\1$ with $h\in K_0$. Since the set of all finitely supported elements of $K$ is dense in $K$, and since $K$ is mapped isometrically onto $K_0$, we can find a sequence $(h_n)$ in $K_0$ which converges in norm to $h$ and such that every $h_n$ is the image of a finitely supported element of $K$.  We define $\psi_n:=2h_n-\1$. Next, for every $\alpha\in \N^{<\omega}$, pick a sequence $(W_\alpha^n)_{n\in \mathbb N}$ of pairwise disjoint non-empty open subsets of $A_\alpha$. By Lemma~\ref{lem:-1;1_weak_approximations_in_B}, we can find a sequence $(\varphi_n)$ in $B$ which converges weakly to $\varphi$ and such that the sets $\{t\in W_\alpha^n: \varphi_n(t)=1\}$ and $\{t\in W_\alpha^n: \varphi_n(t)=-1\}$ have positive measure for every $\alpha, n$. Clearly, $u_n=\lambda \psi_n+(1-\lambda)\varphi_n$ belongs to $E$ and converges weakly to $u$, so we are done.

Now let us prove that every element of $E$ is a Daugavet point. Since $E$ is symmetric, it is sufficient to show that every element of the form $u=\lambda\psi+(1-\lambda)\varphi$, where $\psi$ and $\varphi$ are as in the statement of the lemma, is a Daugavet point. So take such a function $u$ and, for every $\alpha\in \N^{<\omega}$, call $P_\alpha:=\{t\in A_\alpha: \varphi(t)=1\}$ and $N_\alpha:=\{t\in A_\alpha: \varphi(t)=-1\}$, which have positive measure. Note that by assumption, $\psi=-1$ a.e. on all but finitely many $A_\beta$, which means $u=-1$ a.e. on all but finitely many $N_\beta$. Let $S$ be a slice of $B_\eps$. Then $S$ has non-empty intersection with $A$, $-A$ or $B$, so there are three cases to consider. 

\textbf{Case~1.} If $A\cap S$ is non-empty, then, since $A=\cconv\{u_\alpha\}_{\alpha\in \N^{<\omega}}$, we have that $S$ must contain one of the functions $u_\alpha$. As previously observed, $u=-1$ a.e. on all but finitely many $N_\beta$, so since $u_{\alpha\smallfrown n}\to u_\alpha$ weakly, we can find $n_0\in \N$ such that $u_{\alpha\smallfrown n_0}\in S$ and $u$ is equal to $-1$ a.e. on $N_{\alpha\smallfrown n_0}$. But by definition, $u_{\alpha\smallfrown n_0}=1$ a.e. on $N_{\alpha\smallfrown n_0}\subseteq A_{\alpha\smallfrown n_0}$, so this implies that \begin{equation*}
       \{t\in [0,1]: \vert u_{\alpha\smallfrown n_0}(t)-u(t)\vert\geq 2\}
\end{equation*}has positive measure. Hence \begin{equation*}
\nnorm{u-u_{\alpha\smallfrown n_0}}_\eps\geq \norm{u-u_{\alpha\smallfrown n_0}}_\infty= 2.
\end{equation*}

\textbf{Case~2.} If $-A\cap S$ is non-empty, then $S$ must contain one of the $-u_\alpha$. Since those take the constant value $1$ on all except finitely many $A_\beta$, it suffices to compute the value of $u+u_\alpha$ on $N_\beta$, where $\beta$ is such that $u$ is equal to $-1$ a.e. on $U_\beta$ and $-u_\alpha$ is equal $1$ a.e. on $A_\beta\supset N_\beta$.

\textbf{Case~3.} Assume that $B\cap S$ is non-empty, and pick $\tilde{\varphi}$ in this set. By assumption, there exists $\alpha\in \N^{<\omega}$ such that $u=-1$ a.e. on the open set $N_\alpha$. Calling to Lemma~\ref{lem:-1;1_weak_approximations_in_B}, we can then approximate $\tilde{\varphi}$ by a sequence of elements of $B$ which are a.e. equal to $1$ on some subsets of positive measure of $N_\alpha$. Hence, without lost of generality, $\tilde{\varphi}$ satisfies this latter property, and it immediately follows that \begin{equation*}
 \nnorm{u-\tilde{\varphi}}_\eps\geq \Vert u-\tilde{\varphi}\Vert_\infty= 2. 
\end{equation*}
\end{proof}

As a direct consequence, we get:

\begin{cor}\label{cor:2K_0-1_Dpts}
Every function $u$ in the set $A$ is a Daugavet point in $(L_\infty [0,1],\nnorm{\cdot}_\eps)$. 
\end{cor}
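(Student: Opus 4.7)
The strategy is to combine Proposition \ref{prop:weakly_dense_Daugavet_points} with the standard fact that the set of Daugavet points of a Banach space is norm-closed. The key observation is that the elements $2h - \1$ with $h = \Phi(z)$ for $z$ a finite convex combination of the $x_\alpha$'s already belong to the set $E$ of the proposition (by choosing $\theta = 1$, $\lambda = 1$, $\psi = 2h - \1$, and any auxiliary $\varphi \in B$ satisfying condition (3), the existence of which is guaranteed for instance by Lemma \ref{lem:-1;1_weak_approximations_in_B} applied to the starting datum $\varphi = 0 \in B$ and to pairs of distinct points $p_\alpha, q_\alpha$ in each $U_\alpha$). Hence each such element is a Daugavet point.

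Next I would fix an arbitrary $u = 2\Phi(z) - \1 \in 2K_0 - \1$. Since by definition $K = \cconv\{x_\alpha\}_{\alpha\in \N^{<\omega}}$, there exists a sequence $(z_n)$ of finite convex combinations of the $x_\alpha$'s with $z_n \to z$ in $c_0(\N^{<\omega})$. As $\Phi$ is an isometric embedding, it follows that $\Phi(z_n) \to \Phi(z)$ in $\norm{\cdot}_\infty$, and hence also in $\nnorm{\cdot}_\eps$ thanks to the equivalence of the two norms on $C(\Omega)$. Setting $u_n := 2\Phi(z_n) - \1$, we obtain $u_n \to u$ in $\nnorm{\cdot}_\eps$, and by the previous paragraph every $u_n$ is a Daugavet point of $(C(\Omega),\nnorm{\cdot}_\eps)$.

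Finally, I would invoke the norm-closedness of the set of Daugavet points: for any slice $S$ of $B_\eps$ and any $\delta > 0$, picking $n$ large enough so that $\nnorm{u - u_n}_\eps < \delta/2$, the Daugavet property of $u_n$ produces $y \in S$ with $\nnorm{y - u_n}_\eps > 2 - \delta/2$, whence $\nnorm{y - u}_\eps > 2 - \delta$ by the triangle inequality. This closedness argument is completely routine, so I do not expect any serious obstacle: the substance of the corollary has already been absorbed into Proposition \ref{prop:weakly_dense_Daugavet_points}, and all that remains is a density-plus-closedness argument combined with a quick verification that the approximants lie in $E$.
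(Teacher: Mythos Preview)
Your proposal is correct and follows essentially the same approach as the paper's own proof: both argue that Proposition~\ref{prop:weakly_dense_Daugavet_points} already yields the Daugavet property for $2\Phi(z)-\1$ whenever $z$ is a finitely supported element of $K$, then pass to arbitrary $u\in 2K_0-\1$ by density and the norm-closedness of the set of Daugavet points. Your version is slightly more explicit (you spell out the existence of an auxiliary $\varphi$ satisfying condition~(3) and write out the closedness argument), but there is no substantive difference.
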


\begin{proof}
    It immediately follows from Proposition~\ref{prop:weakly_dense_Daugavet_points} that $2\Phi(z)-\1$ is a Daugavet point in $(L_\infty [0,1],\nnorm{\cdot}_\eps)$ for every $z\in K$ with finite support. Since these elements are dense in $A=2K_0-\1$, and since the set of all Daugavet points is closed, we immediately get the desired conclusion. 
\end{proof}

\begin{rem}
Since $B_\eps$ admits non-empty relatively weakly open subsets of arbitrarily small diameter, it follows from Proposition~\ref{prop:weakly_dense_Daugavet_points} that some of the Daugavet points from the set $E$ are not super $\Delta$-points. Furthermore, it follows from Proposition~\ref{prop:PC_transfer} and Corollary~\ref{cor:2K_0-1_Dpts} that $B_\eps$ contains points which are simultaneously Daugavet points and points of continuity, so we get new examples in the line of  \cite{almt20}. 
\end{rem}

In particular, we get that the $u_\alpha$ are Daugavet points in $(L_\infty [0,1],\nnorm{\cdot}_\eps)$. But we can say a bit more about these functions. 

\begin{prop}\label{prop:u_alpha_diametral}
    Every $u_\alpha$ is a preserved extreme point of $B_\eps$. In particular, every $u_\alpha$ is simultaneously a Daugavet point and a ccw $\Delta$-point (in particular a super $\Delta$-point) in $(L_\infty [0,1],\nnorm{\cdot}_\eps)$.
\end{prop}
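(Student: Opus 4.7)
The plan is to verify characterisation~(3) of Proposition~\ref{prop:carapreext}: given nets $(y_s),(z_s)$ in $B_\eps$ with $\tfrac{1}{2}(y_s+z_s)\to u_\alpha$ weakly, I aim to show that $y_s\to u_\alpha$ weakly. The workhorse is evaluation at $t_0$: since $B_\eps\subset B_{C(\Omega)}$, every $f\in B_\eps$ satisfies $|f(t_0)|\leq 1$; moreover, as every $f_\beta$ vanishes on $V_0$, each element of $A$ takes value $-1$ at $t_0$, each element of $-A$ takes value $1$, and each element of $B$ takes value in $[-(1-\eps),1-\eps]$ at $t_0$. In particular $u_\alpha(t_0)=-1$, and the hypothesis forces $y_s(t_0),z_s(t_0)\to -1$.

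By density of $\conv(A\cup -A\cup B)$ in $B_\eps$ and a diagonal argument, I may replace $y_s$ by an element $\tilde y_s=\mu_s p_s+\nu_s q_s+\rho_s r_s$ (with $p_s\in A$, $q_s\in -A$, $r_s\in B$, and $\mu_s+\nu_s+\rho_s=1$) at vanishingly small $\nnorm{\cdot}_\eps$-distance from $y_s$, and similarly for $z_s$. Evaluating at $t_0$ gives
\[
\tilde y_s(t_0)\geq -\mu_s+\nu_s-(1-\eps)\rho_s = -1+2\nu_s+\eps\rho_s,
\]
and combining with $\tilde y_s(t_0)\to -1$ yields $\nu_s\to 0$ and $\rho_s\to 0$, so that $\nnorm{y_s-p_s}_\eps\to 0$. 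The same argument produces $\tilde p_s\in A$ with $\nnorm{z_s-\tilde p_s}_\eps\to 0$. Writing $p_s=2\Phi(w_s)-\1$ and $\tilde p_s=2\Phi(v_s)-\1$ with $w_s,v_s\in K$, the hypothesis translates into $\Phi\bigl(\tfrac{w_s+v_s}{2}\bigr)\to\Phi(x_\alpha)$ weakly in $C(\Omega)$. Since $\Phi$ is a linear isometric embedding (hence a weak-to-weak homeomorphism onto its image, by Hahn--Banach), this is equivalent to $\tfrac{w_s+v_s}{2}\to x_\alpha$ weakly in $c_0(\mathbb{N}^{<\omega})$. The last lemma of Section~\ref{sec:preliminaries} asserts that $x_\alpha$ is a preserved extreme point of $K$; condition~(3) of Proposition~\ref{prop:carapreext} then gives $w_s\to x_\alpha$ weakly, whence $p_s\to u_\alpha$ weakly and finally $y_s\to u_\alpha$ weakly. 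This establishes that $u_\alpha$ is a preserved extreme point of $B_\eps$.

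That $u_\alpha$ is a Daugavet point is already contained in Corollary~\ref{cor:2K_0-1_Dpts}. To deduce the ccw $\Delta$-point property, let $D=\sum_{i=1}^n\lambda_i W_i$ be a convex combination of non-empty relatively weakly open subsets of $B_\eps$ with $u_\alpha\in D$, and write $u_\alpha=\sum_i\lambda_i w_i$ with $w_i\in W_i$ and $\lambda_i>0$. Extremality of $u_\alpha$ (an immediate consequence of being preserved extreme) forces $w_i=u_\alpha$ for every $i$. Characterisation~(2) of Proposition~\ref{prop:carapreext} then provides a slice $S$ of $B_\eps$ containing $u_\alpha$ and contained in $\bigcap_i W_i$; hence $S\subset D$. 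The Daugavet point property applied to $S$ produces points of $D$ at distance greater than $2-\eps$ from $u_\alpha$, as required. The super $\Delta$-point property is a trivial particular case.

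The main obstacle is the separation step: controlling the coefficients $\mu_s,\nu_s,\rho_s$ of a generic decomposition of $y_s$ in $\conv(A\cup -A\cup B)$ so as to reduce everything to a weak-convergence statement inside $K$. Evaluation at $t_0$ cleanly distinguishes the three building blocks of $B_\eps$, after which the isometric embedding $\Phi$ lets us transfer the (already established) preserved extreme property of $x_\alpha$ in $K$ to $u_\alpha$ in $B_\eps$.
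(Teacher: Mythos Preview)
Your argument is correct and follows essentially the same route as the paper's proof: evaluate at $t_0$ to kill the $-A$ and $B$ components, then push through the isometry $\Phi$ and invoke the preserved extremality of $x_\alpha$ in $K$. The only cosmetic differences are that you first separate $y_s(t_0)\to -1$ and $z_s(t_0)\to -1$ individually (using $|f(t_0)|\leq 1$ on $B_\eps$) before decomposing, whereas the paper decomposes both nets at once and evaluates the average; and that you spell out the ccw $\Delta$-point deduction via extremality and characterisation~(2) of Proposition~\ref{prop:carapreext}, whereas the paper simply cites \cite[Proposition~3.13]{mpr23}, which is exactly the argument you wrote down.
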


\begin{proof}
In order to prove that $u_\alpha$ is preserved extreme, it is sufficient to prove in virtue of Proposition \ref{prop:carapreext} that for any nets $(u_s)$ and $(v_s)$ in $\conv(A\cup -A\cup B)$ such that $\frac{u_s+v_s}{2}\to u_\alpha$ weakly,  we have that $u_s\to u_\alpha$ weakly. So pick such nets, and for every $s$, let us write \begin{equation*}
u_s=\lambda_s^1 f_s^1 - \lambda_s^2 f_s^2 + \lambda_s^3 f_s^3  \text{ and } v_s=\mu_s^1 g_s^1 - \mu_s^2 g_s^2 + \mu_s^3 g_s^3
\end{equation*} with $\lambda_s^i,\mu_s^i\in[0,1]$, $\sum_{i=1}^3\lambda_s^i=\sum_{i=1}^3\mu_s^i=1$, $f_s^1,f_s^2,g_s^1,g_s^2\in A$ and $f_s^3,g_s^3\in B$. Testing against $E_0$, we get: \begin{equation*}
E_0\left(\frac{u_s+v_s}{2}\right)=-\frac{\lambda_s^1+\mu_s^1}{2}+\frac{\lambda_s^2+\mu_s^2}{2}+\frac{\lambda_s^3}{2}E_0(f_s^3)+\frac{\mu_s^3}{2}E_0(g_s^3)\to E_0(u_\alpha)=-1.
\end{equation*} But since $E_0(f_s^3),E_0(g_s^3)>-1+\eps$, this implies that $\frac{\lambda_s^1+\mu_s^1}{2}\to 1$ and $\frac{\lambda_s^2+\mu_s^2}{2}, \frac{\lambda_s^3}{2}, \frac{\mu_s^3}{2}\to 0$.  Hence, $\lambda_s^1,\mu_s^1\to 1$,  $\lambda_s^2,\mu_s^2,\lambda_s^3,\mu_s^3\to 0$ and  $\frac{f_s^1+g_s^1}{2}\to u_\alpha=2\Phi(x_\alpha)-\1$ weakly. Write $f_s^1=2\Phi(y_s)-\1$ and $g_s^1=2\Phi(z_s)-\1$ with $y_s,z_s\in K$. Since $\Phi$ is isometric, we get, by the weak-to-weak continuity of bounded operators, that $\frac{y_s+z_s}{2}\to x_\alpha$ weakly in $K$. Since $x_\alpha$ is preserved extreme in $K$, it follows that $y_s\to x_\alpha$ weakly in $K$, and going back through $\Phi$, we conclude that $f_s^1\to u_\alpha$ weakly. Hence $u_s\to u_\alpha$ weakly, and we are done. The final part follows since it was observed in \cite[Proposition~3.13]{mpr23} that every preserved extreme $\Delta$-point is a ccw $\Delta$-point. 
\end{proof}

\begin{rem}\label{rem:superdeltanotsuperdauga}
    None of the $u_\alpha$ is super Daugavet.
    \end{rem}

\begin{proof}
For every $n\in\N$, let $u_\alpha^n:=2\left(h_\alpha+\frac{1}{n}\sum_{i=1}^nf_{\alpha\smallfrown i}\right)-\1$. We have \begin{equation*}
\nnorm{u_\alpha-u_\alpha^n}_\eps\leq \frac{1}{1-\eps}\norm{u_\alpha-u_\alpha^n}_\infty=\frac{1}{1-\eps}\norm{\Phi\left(\frac{1}{n}\sum_{i=1}^nx_{\alpha\smallfrown i}\right)}_\infty=\frac{1}{n(1-\eps)}.
\end{equation*} Consider 
\begin{equation*}
    \tilde W_{n,\rho}:= \left\{ f \in B_\varepsilon : E_0(f)<-1+\rho,\ \text{and}\ E_{\alpha \smallfrown i}(f) > 2\left(\frac{1}{n}-\rho \right)-1  \text{ for every }i\in \{1,...,n\} \right\}.
\end{equation*} Then we can show as in the proof of Proposition~\ref{prop:small_weakly_open_subsets_B_eps} that the diameter of  $\tilde W_{n, 1/n^2}$ goes to $0$ as $n$ goes to infinity. Since $u_\alpha^n\in \tilde W_{n, 1/n^2}$ and since the distance from $u_\alpha$ to $u_\alpha^n$ goes to 0, we get that $u_\alpha$ is not a super Daugavet point. 
\end{proof}

We will now show that $B_\eps$ contains points satisfying stronger diametral notions. We start by the following easy observation.

\begin{lem}\label{lem:infinite_-1,1_functions_belong_to_B}
Let $\varphi\in B_\eps$ be a function such that the sets
$$P_\alpha:=\{t\in U_\alpha: \varphi(t)=1\}\ \mbox{ and }\ N_\alpha:=\{t\in U_\alpha: \varphi(t)=-1\}$$
have positive measure for every $\alpha\in \mathbb N^{<\omega}$. Then $\varphi$ belongs to $B$.
\end{lem}

\begin{proof}
    Fix $\delta\in(0,1)$. We can find $h_1,h_2\in K_0$, $\psi\in B$ and $\lambda_1,\lambda_2,\lambda_3\geq 0$ such that $\lambda_1+\lambda_2+\lambda_3=1$ and $\norm{\varphi-(\lambda_1(2h_1-\1)+\lambda_2(-2h_2+\1)+\lambda_3\psi)}_\infty <\delta$. Also, we may assume without loss of generality that $h_1$ and $h_2$ are the images of finitely supported elements of $K$. So choose $\alpha\in \N^{<\omega}$ belonging in neither of these supports. Then $2h_1-\1=-1$ and $-2h_2+\1=1$  a.e. on $U_\alpha$. Taking into account that \begin{equation*}
        \esssup_{t\in P_\alpha}  (\lambda_1(2h_1-\1)+\lambda_2(-2h_2+\1)+\lambda_3\psi)(t) >1-\delta
    \end{equation*} we infer \begin{equation*}
        1-\delta<-\lambda_1+\lambda_2+\lambda_3\esssup_{t\in P_\alpha}  \psi(t) <-\lambda_1+\lambda_2+\lambda_3=1-2\lambda_1,
    \end{equation*} which implies $\lambda_1 < \delta/2$. Similarly we get \begin{equation*}
        -1+\delta>-\lambda_1+\lambda_2+\lambda_3\esssup_{t\in N_\alpha}\psi(t)>-\lambda_1+\lambda_2-\lambda_3=-1+2\lambda_2,
    \end{equation*} which implies $\lambda_2 < \delta/2$. Hence $\lambda_3>1-\delta$, and it follows that \begin{align*}
        \norm{\varphi-\psi}_\infty &\leq \lambda_1\norm{2h_1-\1}_\infty+\lambda_2\norm{2h_2-\1}_\infty+(1-\lambda_3)\norm{\psi}_\infty \\
        &\quad +\norm{\varphi-(\lambda_1(2h_1-\1)+\lambda_2(-2h_2+\1)+\lambda_3\psi)}_\infty < 3\delta.
    \end{align*} Since $B$ is closed, the conclusion follows.
\end{proof}

\begin{prop}\label{prop:infinite_-1,1_diametral}
Let $\varphi\in B_\eps$ be a function such that the sets
$$P_\alpha:=\{t\in U_\alpha: \varphi(t)=1\}\ \mbox{ and }\ N_\alpha:=\{t\in U_\alpha: \varphi(t)=-1\}$$
have positive measure for every $\alpha\in \mathbb N^{<\omega}$. Then $\varphi$ is simultaneously a super Daugavet point and a ccw $\Delta$-point in $(L_\infty[0,1],\nnorm{\cdot}_\eps)$.
\end{prop}

\begin{proof}
We first prove that $\varphi$ is a super Daugavet point. Let $W$ be a non-empty relatively weakly open subset of $B_\varepsilon$. By Proposition~\ref{prop:weakly_dense_Daugavet_points} there exists a function $u\in E$ which belongs to the weakly open set $W$. Write $u=\theta\lambda u_1+(1-\lambda)u_2$ with $\theta\in\{-1,1\}$, $\lambda\in[0,1]$, $u_1\in A$ and $u_2\in B$. We will assume that $\theta=1$, because the other case can be shown by the analogous method.  Up to approximating $u_2$ as in Lemma~\ref{lem:-1;1_weak_approximations_in_B}, we may assume without lost of generality that $u_2$ is equal to $-1$ a.e. on a subset of non-zero measure of $P_\alpha$ for every $\alpha\in\N^{<\omega}$.  Hence $u$ is equal to $-1$ a.e. on subsets of positive measure of all but finitely many $P_\alpha$, and it follows that $\nnorm{\varphi-u}_\eps\geq \norm{\varphi-u}_\infty=2$, as we wanted.

Next, let us prove that $\varphi$ is a ccw $\Delta$-point. Assume that $\varphi\in \sum_{i=1}^n\lambda_iW_i$, where the $W_i$ are non-empty relatively weakly open subset of $B_\eps$, $\lambda_i>0$, and $\sum_{i=1}^n\lambda_i=1$. Then write $\varphi=\sum_{i=1}^n\lambda_i\varphi_i$ with $\varphi_i\in W_i$. Since $\norm{\varphi_i}_\infty\leq 1$ for every $i$, it follows that \begin{equation*}
    \{t\in[0,1]:\ \varphi(t)=1\}=\bigcap_{i=1}^n \{t\in[0,1]:\ \varphi_i(t)=1\}
\end{equation*} and \begin{equation*}
    \{t\in[0,1]:\ \varphi(t)=-1\}=\bigcap_{i=1}^n \{t\in[0,1]:\ \varphi_i(t)=-1\}.
\end{equation*}Hence, every $\varphi_i$ is equal to $1$ a.e. on the $P_\alpha$ and to $-1$ a.e. on the $N_\alpha$. In particular, every $\varphi_i$ belongs to $B$ by Lemma~\ref{lem:infinite_-1,1_functions_belong_to_B}.

By Lemma~\ref{lem:-1;1_weak_approximations_in_B}, we can find for every $i$ a sequence of functions $(\varphi_i^k)$ in $B_\eps$ which converges weakly to $\varphi_i$ and such that $\varphi_i^k=-1$ a.e. on some subsets $A_\alpha\subseteq P_\alpha$ of positive measure for every $\alpha\in \mathbb N^{<\omega}$ and every $i,k$. Then we get that for large enough $k$'s, each $\varphi_i^k$ belongs to the corresponding $W_i$. It follows that $\sum_{i=1}^n\lambda_i\varphi_i^k\in \sum_{i=1}^n\lambda_iW_i$. Finally, \begin{equation*}
    \nnorm{\varphi-\sum_{i=1}^n\lambda_i\varphi_i^k}_\eps\geq  \norm{\varphi-\sum_{i=1}^n\lambda_i\varphi_i^k}_\infty \geq \esssup_{t\in A_\alpha} \abs{\varphi(t)-\sum_{i=1}^n\lambda_i\varphi_i^k(t)}=2,
\end{equation*} so the conclusion follows.
\end{proof}

\begin{cor}\label{cor:super_Delta_norming}

The set of all points of $B_\eps$ which are simultaneously Daugavet points and ccw $\Delta$-points is norming.

\end{cor}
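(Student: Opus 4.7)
The plan is to use the characterisation of norming sets recalled in Section~\ref{sec:preliminaries}: since both the Daugavet-point property and the ccw $\Delta$-point property are invariant under negation, the set $D \subseteq B_\eps$ of points that are simultaneously Daugavet and ccw $\Delta$ is symmetric, so being norming is equivalent to $D$ meeting every slice of $B_\eps$. I would fix an arbitrary slice $S$ of $B_\eps$ and exploit the decomposition $B_\eps = \cconv(A \cup (-A) \cup B)$: since a continuous linear functional attains its supremum over $B_\eps$ as the maximum of the suprema over $A$, $-A$ and $B$, at least one of $S \cap A$, $S \cap (-A)$, $S \cap B$ is non-empty. The argument then splits into three cases.

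The two cases where $S$ meets $A$ or $-A$ are immediate: since $A = \cconv\{u_\alpha\}_{\alpha \in \N^{<\omega}}$, any open half-space meeting $A$ must contain some generator $u_\alpha$, and symmetrically for $-A$. By Proposition~\ref{prop:u_alpha_diametral} each $\pm u_\alpha$ is simultaneously a Daugavet point and a ccw $\Delta$-point, so $D \cap S \neq \emptyset$ in these situations.

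The case $S \cap B \neq \emptyset$ carries the content. Here my plan is to pick any $\varphi \in S \cap B$ and weakly approximate it inside $B$ by functions satisfying the hypothesis of Proposition~\ref{prop:infinite_-1,1_diametral}, namely taking the values $1$ and $-1$ on every $U_\alpha$. Enumerating $\N^{<\omega}$ and applying Lemma~\ref{lem:-1;1_weak_approximations_in_B} to the pairwise disjoint family $(U_\alpha)_{\alpha \in \N^{<\omega}} \subset \Omega \setminus V_0$ with suitable sequences of distinct pairs $p_\alpha^n, q_\alpha^n \in U_\alpha$ produces a sequence $(\varphi_n) \subset B$ with $\varphi_n \to \varphi$ weakly and $\varphi_n(p_\alpha^n) = -\varphi_n(q_\alpha^n) = 1$ for all $\alpha, n$. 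In particular, each $\varphi_n$ takes both values $1$ and $-1$ on every $U_\alpha$, so by Proposition~\ref{prop:infinite_-1,1_diametral} it is simultaneously a super Daugavet point (hence a Daugavet point) and a ccw $\Delta$-point. Weak openness of $S$ then puts $\varphi_n \in S$ for all large $n$, which finishes the case.

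I do not foresee any serious obstacle: the substantive content is already packaged in Propositions~\ref{prop:u_alpha_diametral} and \ref{prop:infinite_-1,1_diametral} together with Lemma~\ref{lem:-1;1_weak_approximations_in_B}, so the corollary amounts to a careful case-by-case bookkeeping. The only mild subtlety is checking that the sets $U_\alpha$ for $\alpha \in \N^{<\omega}$ lie in $\Omega \setminus V_0$ (so that Lemma~\ref{lem:-1;1_weak_approximations_in_B} legitimately applies), but this is immediate from the initial choice $\overline{V_0} \cap \overline{V_\alpha} = \emptyset$ combined with $U_\alpha \subset V_\alpha$.
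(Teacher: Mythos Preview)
Your proposal is correct and follows essentially the same route as the paper's own proof: split according to whether the slice meets $A$, $-A$, or $B$; invoke Proposition~\ref{prop:u_alpha_diametral} in the first two cases; and in the third case use weak density in $B$ of functions taking both values $\pm 1$ on every $U_\alpha$, combined with Proposition~\ref{prop:infinite_-1,1_diametral}. The only difference is that you spell out explicitly, via Lemma~\ref{lem:-1;1_weak_approximations_in_B}, why such functions are weakly dense in $B$, whereas the paper simply asserts this density.
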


\begin{proof}

Every slice $S$ of $B_\eps$ intersects either $A$, $-A$ or $B$.  But since every slice of $A$ must contain one of the $u_\alpha$, and since functions taking value $1$ and $-1$ on some subsets of positive measure of every $U_\alpha$ are weakly dense in $B$, the result immediately follows from Proposition~\ref{prop:u_alpha_diametral} and \ref{prop:infinite_-1,1_diametral}.
\end{proof}

Finally, let us prove that, for small enough $\eps$'s, there are points in $B_\varepsilon$ which are not $\Delta$-points.

\begin{proposition}
    If $\varepsilon<\frac{1}{7}$, then the space $\left(L_\infty([0,1]),\nnorm{\cdot}_\varepsilon\right)$ fails the DLD2P.
\end{proposition}
\begin{proof}

Consider the function $f:=(1-\varepsilon)\left(\1_{[0,\frac{1}{4}]}-\1_{[\frac{1}{4},\frac{1}{2}]}\right)$, which belongs to $(1-\varepsilon)B_{L_{\infty}([0,1])}$, and thus to $B_{\varepsilon}$. We will show that $\tilde{f}=\frac{f}{\nnorm{f}_\varepsilon}$ is not a $\Delta$-point. Consider the functional $F$ on $L_{\infty}[0,1]$ given by
\begin{equation*}
F(\varphi)=4\cdot\left(\int_0^{\frac{1}{4}}\varphi(t)dt-\int_{\frac{1}{4}}^{\frac{1}{2}}\varphi(t) dt\right)
\end{equation*} for every $\varphi\in L_{\infty}[0,1]$. Since $F(\varphi)=0$ for every $\varphi\in A$, we obtain the following estimate on the norm of $F$:
\begin{equation*}
    \nnorm{F}_\varepsilon=\sup\{F(\varphi)\colon \varphi\in A\cup(-A)\cup B\}\leq 2-\varepsilon.
\end{equation*}
This in turn allows us to provide a lower bound on the norm of $f$:
\begin{equation*}
    \nnorm{f}_\varepsilon\geq \frac{F(f)}{2-\varepsilon}=\frac{2(1-\varepsilon)}{2-\varepsilon}.
\end{equation*} We now define the slice of $B_\varepsilon$ which will witness that $\tilde{f}$ is not a $\Delta$-point. Let $\eta>0$ such that $3\eta<1-4\varepsilon$ and $\frac{5}{2}\varepsilon+\eta<\frac{1}{2}$, and consider
\begin{equation*}
    S := \left\{\varphi\in B_{\varepsilon}\colon F(\varphi)> 2(1-\varepsilon)-\eta\right\}.
\end{equation*} The set $S$ is nonempty, since $f$, and consequently $\tilde{f}$, belong to $S$, so it does indeed define a slice of $B_\eps$. To finish the proof, it suffices to uniformly bound away from $2$ the distance of any function in $S$ to $\tilde{f}$. In fact, since $B_\varepsilon$ is the closed convex hull of $A\cup -A\cup B$, and since no function in $A$ or $-A$ belongs to $S$, it is enough to uniformly bound away from $2$ the distance of any function in $S\cap B$ to $\tilde{f}$ (calling to \cite[Lemma 2.1]{jr22}). So fix $\varphi\in B$ such that $F(\varphi)>2(1-\varepsilon)-\eta$. Since $\|\varphi\|_\infty\leq 1$, we obtain that
\begin{align*}
    2(1-\varepsilon)-\eta&<F(\varphi)=E_0(\varphi)-4\cdot\int_{1/4}^{1/2}\varphi(t)dt\\
    &\leq E_0(\varphi)+1.
\end{align*}
With the previous equation and with the fact that $E_0(\varphi) \leq (1-\varepsilon)$, we get that $E_0(\varphi)$ belongs to $[1-2\varepsilon-\eta,1-\varepsilon]$. On the other hand, we have that
\begin{equation*}
    1-\varepsilon\leq E_0(\tilde{f}) = \frac{1-\varepsilon}{\nnorm{f}_\varepsilon}\leq\frac{2-\varepsilon}{2}.
\end{equation*}
Defining $\tau = E_0(\tilde{f}-\varphi)$, we obtain, combining both estimates, that:
\begin{equation}
\label{estimate_on_tau}
0\leq \tau \leq \frac{2-\varepsilon}{2}-(1-2\varepsilon-\eta)=\frac{3}{2}\varepsilon+\eta.
\end{equation}
Now, consider the function $h=\tilde{f}-\varphi-\tau\cdot\1_{[0,\frac{1}{4}]}$, which clearly satisfies $E_0(h)=0$. Since the inequality $-\varepsilon\leq\tilde{f}-\varphi\leq 2-\frac{\varepsilon}{2}$ holds almost everywhere on $[0,\frac{1}{4}]$, by the choice of $\eta$ and equation \eqref{estimate_on_tau} we get that $\|h\|_\infty\leq  2-\frac{\varepsilon}{2}-\tau$. Next, observe that $\frac{h}{\|h\|_\infty}$ belongs to $B$, since $\frac{h}{\|h\|_\infty}$ is contained in both $B_{L_{\infty}[0,1]}$ and $B_{\ker(E_0)}$, and can be trivially written as $\frac{h}{\|h\|_\infty}=(1-\varepsilon)\frac{h}{\|h\|_\infty}+\varepsilon \frac{h}{\|h\|_\infty}$. In other words, we have that $\nnorm{h}_\varepsilon=\|h\|_\infty$. On the one hand, this immediately yields the estimate
\begin{equation*}
    \nnorm{h}_\varepsilon=\|h\|_\infty \leq 2-\frac{\varepsilon}{2}-\tau.
\end{equation*}
On the other hand, using the triangle inequality, and the fact that $\nnorm{\cdot}_\varepsilon \leq \frac{1}{1-\varepsilon}\|\cdot\|_\infty$,  we obtain
\begin{equation*}
    \nnorm{h}_\varepsilon\geq \nnorm{\tilde{f}-\varphi}_\varepsilon-\nnorm{\tau\cdot\1_{[0,\frac{1}{4}]}}_\varepsilon\geq \nnorm{\tilde{f}-\varphi}_\varepsilon-\frac{\tau}{1-\varepsilon}.
\end{equation*} Combining the two previous bounds, doing a few more simple computations, and using equation \eqref{estimate_on_tau}, we get that
\begin{equation*}
    \nnorm{\tilde{f}-\varphi}_\varepsilon\leq 2-\frac{\varepsilon}{2}-\tau+\frac{\tau}{1-\varepsilon}=2-\frac{\varepsilon}{2}+\frac{\tau\varepsilon}{1-\varepsilon}\leq 2-\frac{\varepsilon}{2}+\frac{\varepsilon(\frac{3}{2}\varepsilon+\eta      )}{1-\varepsilon}=2-\varepsilon\left(\frac{1}{2}-\frac{\frac{3}{2}\varepsilon+\eta}{1-\varepsilon}\right),
\end{equation*}
and the last term is smaller than or equal to $2-\frac{\varepsilon}{4}$. Indeed \begin{equation*}
    \begin{split}2-\varepsilon\left(\frac{1}{2}-\frac{\frac{3}{2}\varepsilon+\eta}{1-\varepsilon}\right)\leq 2-\frac{\varepsilon}{4}& \Leftrightarrow \frac{1}{4}\leq \frac{1}{2}-\frac{\frac{3}{2}\varepsilon+\eta }{1-\varepsilon}\Leftrightarrow \frac{3}{2}\varepsilon+\eta\leq \frac{1}{4}(1-\varepsilon)\\
& \Leftrightarrow 6\varepsilon+4\eta\leq 1-\varepsilon\Leftrightarrow 7\varepsilon+4\eta\leq 1,
\end{split}
\end{equation*} and the last inequality holds by the choice of $\eta$.
\end{proof}





\section*{Acknowledgements}

We thank the organisers of 2023 ICMAT-IMAG Doc-Course in Functional Analysis for the support and the hospitality during the development of the supervised research program that resulted in this paper.

This work was supported by MCIN/AEI/10.13039/501100011033/FEDER, UE: grant
\\ PID2021-122126NB-C31 (L\'opez-P\'erez, Mart\'in, Quero and Rueda Zoca), grant PID2021-122126NB-C33 (Cobollo and Quilis), and grants PID2019-105011GB-I00 and PID2022-139449NB-I00 (Cobollo); Junta de Andaluc\'ia: Grants FQM-0185 (L\'opez-P\'erez, Mart\'in, Quero and Rueda Zoca); Grant PGC2018-097286-B-I00 by MCIU/AEI/FEDER, UE (Rodríguez-Vidanes).

The research of Ch. Cobollo was also supported by Generalitat Valenciana (through Project PROMETEU/2021/070 and the predoctoral contract CIACIF/2021/378), and by Universitat Polit\`ecnica de Val\`encia.

The research of G. L\'opez-P\'erez and M. Mart\'in was also supported by  MICINN (Spain) Grant
CEX2020-001105-M (MCIU, AEI).

The work of Y. Perreau was supported by the Estonian Research Council grant SJD58.

The research of A. Quero was also supported by the Spanish Ministerio de Universidades through a predoctoral contract FPU18/03057.

The research of A. Quilis was also supported by the French ANR project No. ANR-20-CE40-0006.

The research of D.L. Rodríguez-Vidanes was also supported by MCIU and the European Social Fund through a ``Contrato Predoctoral para la Formación de Doctores, 2019'' (PRE2019-089135) and by the ``Instituto de Matemática Interdisciplinar'' (IMI).

The research of A. Rueda Zoca was also funded by Fundaci\'on S\'eneca: ACyT Regi\'on de Murcia grant 21955/PI/22 and by Generalitat Valenciana project CIGE/2022/97.

\end{document}